\newcommand{\thmref}[1]{Theorem~\ref{#1}}
\newcommand{\eqnref}[1]{~(\ref{#1})}
\def\Aut{\operatorname{Aut}}
\newtheorem{thm}{Theorem}[section]
\newtheorem{lem}[thm]{Lemma}
\theoremstyle{definition}
\newtheorem{prop}[thm]{Proposition}
\subjclass{Primary 17B67, 81R10}
\theoremstyle{rem}
\numberwithin{equation}{section}
\title[Module structure of the center ]{Module structure of the center of the universal central extension of a genus zero Krichever-Novikov algebra.}
\author{Ben Cox}
\keywords{Automorphism groups, Universal Central Extensions}
\address{Department of Mathematics \\
The University of Charleston \\
66 George Street  \\
Charleston SC 29424, USA}\email{coxbl@cofc.edu}
\urladdr{http://coxbl.people.cofc.edu/papers/preprints.html}
\begin{document}
 \begin{abstract}  We describe how the center of the universal central extension of the genus zero Krichever-Novikov current algebra  decomposes as a direct sum of irreducible modules for automorphism group of the coordinate ring of this algebra.  
\end{abstract}

\maketitle
\section{Introduction}

   Let $a_1,\dots, a_N$ be distinct complex numbers.  In previous work of the author together with  X. Guo, R. Lu, and K. Zhao, we described explicitly the automorphism groups of $\text{Der}\,(R)$ for the rings $R= \mathbb C[t,(t-a_1)^{-1},\dots, (t-a_N)^{-1}]$ where the possible groups that can appear are the cyclic groups $C_n$, the dihedral groups $D_n$, $S_4$, $A_4$ and $A_5$ (see \cite{MR3211093}).  This famous list of groups discovered by F. Klein appear naturally in the McKay correspondence and in the study of resolution of singularities.   
   
In the present work we describe in a series of propositions the irreducible subrepresentations that appear in the decomposition of $ \Omega_R^1/dR$ under the action of $\text{Aut}(R)$ for the various groups listed above.  The irreducible representations that do appear all have multiplicity one, but not all irreducible representations make their appearance.  The proofs use classical results of Schur and Frobenius.  These techniques can be found in nearly any book on the representation theory of finite groups such as Serre's book \cite{MR0450380} or Fulton and Harris \cite{MR1153249}.  We don't know of any conceptual nor geometric reason why not all irreducible representations appear and when they appear why they occur with multiplicity one.

From the work of S. Block, C. Kassel and J.L. Loday (see \cite{MR618298}, \cite{MR694130}, and \cite{MR772062}) we know if $\mathfrak g $ is a simple Lie algebra and $R$ is a commutative algebra, both defined over the complex numbers, then the universal central extension $\hat{{\mathfrak g}}$ of ${\mathfrak g}\otimes R$ is the vector space $\hat{{\mathfrak g}}:=\left({\mathfrak g}\otimes R\right)\oplus \Omega_R^1/dR$ where $\Omega_R^1/dR$ is the space of K\"ahler differentials modulo exact forms (see \cite{MR772062}).  The vector space $\hat{{\mathfrak g}}$ then becomes a Lie algebra by defining
$$
[x\otimes f,y\otimes g]:=[xy]\otimes fg+(x,y)\overline{fdg},\quad [x\otimes f,\omega]=0
$$
for $x,y\in\mathfrak g$, $f,g\in R$,  $\omega\in \Omega_R^1/dR$ and $(-,-)$ denotes the Killing form on $\mathfrak g$.  In the above $\overline{a}$ denotes the image of $a\in\Omega^1_R$ in the quotient space  $\Omega^1_R/dR$.    When $R$ is the ring of meromorphic functions on Riemann surface with a fixed finite number of poles, the algebra $\hat{\mathfrak g}$ is called a current Krichever-Novikov algebra and has been extensively studied (see for example the books \cite{MR3237422} and \cite{MR2985911} and their references). There are a number of natural questions that arise in studying such Lie algebras and one of them arises as follows.  

It is known that $l$-adic cohomology groups tend to be acted on by Galois groups, and the way in which these cohomology groups decompose can give interesting and important number theoretic information (see for example R. Taylor's review of Tate's conjecture \cite{MR2060030}).  Moreover it is an interesting and very difficult problem to describe the group $\text{Aut}(R)$ where $R$ is the space of meromorphic functions on a compact Riemann surface $X$ and to determine the module structure of its induced action on the module of holomorphic differentials $\mathcal H^1(X)$ (see \cite{MR1796706}).    Now if one realizes the fact that $\Omega_R^1/dR$ can be identified with the $H_2(\mathfrak{sl}(R),\mathbb C)$ (see \cite{MR618298}),  it is natural to ask how $\Omega_R^1/dR$ decomposes into a direct sum of irreducible modules under the action of the $\text{Aut}(R)$.  We answer this question when $R$ is the $N$-point algebra $R=\mathbb C[t,(t-a_1)^{-1},\dots, (t-a_N)^{-1}]$ with $a_1,\dots,a_N$  distinct complex numbers giving rise to the appropriate Kleinian groups $\text{Aut}(R)$.

   Let us give a bit of background where these algebras  appear in conformal field theory. The ring of functions  on the Riemann sphere regular everywhere except at a finite number of points appears naturally in Kazhdan and Luszig's explicit study of the tensor structure of modules for affine Lie algebras (see \cite{MR1186962} and \cite{MR1104840}).   M. Bremner gave these the name of an {\it $N$-point algebra} (see \cite{MR1261553}).   In the monograph  \cite[Ch. 12]{MR1849359} algebras of the form $\oplus _{i=1}^N\mathfrak g((t-x_i))\oplus\mathbb Cc$ appear in the description of the conformal blocks.  These contain the $n$-point algebras $ \mathfrak g\otimes \mathbb C[t,(t-a_1)^{-1},\dots, (t-a_N)^{-1}]\oplus\mathbb Cc$ modulo part of the center $\Omega_R/dR$.   M. Bremner explicitly described the universal central extension of such an algebra in \cite{MR1261553} where the center has basis $\overline{(t-a_1)^{-1}\,dt},\dots, \overline{(t-a_N)^{-1}\,dt}$.
 The current algebra $\mathfrak g\otimes R\oplus \Omega_R^1/dR $ and the derivation algebra $\text{Der}(R)$ are examples of Krichever-Novikov algebras for the genus zero Riemann sphere minus a finite number of points $R=\mathbb C[t,(t-a_1)^{-1},\dots, (t-a_N)^{-1}]$.  In Krichever and Novikov's original work they only dealt with a particular one dimensional central extension (see for example  \cite{MR925072}, \cite{MR902293},  \cite{MR998426}, \cite{MR3237422} and \cite{MR2985911}). 
  Also recently M. Schlichemaier has written a paper giving a different point of view of the universal central extension of the $N$-point Virasoro algebra (see \cite{Schlichenmaier2015}).

   We should also mention work of S. Lombardo,  A. V. Mikhailov, and J. A. Sanders dealing with the structure of genus zero multi-point Krichever-Novikov current al-
gebras (and their modules) fixed under the same type of symmetries as in our work with X. Guo, R. Lu,  and K. Zhao (see \cite{MR2166845}, \cite{MR2331220}, \cite{MR2718933}, and \cite{MR3254844}).  They do not describe how the center of the universal central extension decomposes into a direct sum of irreducible modules for the automorphism group of the coordinate ring.  

\section{The Universal Central Extension of the Current Algebra $\mathfrak g\otimes \mathcal R$.}
 Let $R$ be a commutative algebra defined over $\mathbb C$.
Consider the left $R$-module  $F=R\otimes R$ with left action given by $f( g\otimes h ) = f g\otimes h$ for $f,g,h\in R$ and let $K$  be the submodule generated by the elements $1\otimes fg  -f \otimes g -g\otimes f$.   Then $\Omega_R^1=F/K$ is the module of {\it K\"ahler differentials}.  The element $f\otimes g+K$ is traditionally denoted by $fdg$.  The canonical map $d:R\to \Omega_R^1$ is given by $df  = 1\otimes f  + K$.  The {\it exact differentials} are the elements of the subspace $dR$.  The coset  of $fdg$  modulo $dR$ is denoted by $\overline{fdg}$.  As C. Kassel showed the universal central extension of the current algebra $\mathfrak g\otimes R$ where $\mathfrak g$ is a simple finite dimensional Lie algebra defined over $\mathbb C$, is the vector space $\hat{\mathfrak g}=(\mathfrak g\otimes R)\oplus \Omega_R^1/dR$ with Lie bracket given by
$$
[x\otimes f,Y\otimes g]=[xy]\otimes fg+(x,y)\overline{fdg},  [x\otimes f,\omega]=0,  [\omega,\omega']=0,
$$
  where $x,y\in\mathfrak g$, and $\omega,\omega'\in \Omega_R^1/dR$ and $(x,y)$  denotes the Killing  form  on $\mathfrak g$.  
  
\begin{prop}[\cite{MR1261553}]\label{uce}  Let $a=(a_1,\dots, a_N)$ and $ R_a= \mathbb C[t,(t-a_1)^{-1},\dots, (t-a_N)^{-1}]$ be as above.  The set 
$$
\{\omega_1:=\overline{(t-a_1)^{-1} dt},\dots, \omega_N:=\overline{(t-a_N)^{-1}\,dt}\}
$$
 is a basis of $\Omega_{R_a}^1/d R_a$.
\end{prop}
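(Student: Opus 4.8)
The plan is to identify $\Omega^1_{R_a}/dR_a$ with the cokernel of the ordinary derivative acting on $R_a$, and then read off the answer from a partial-fraction basis. First I would observe that $R_a$ is the localization of the polynomial ring $\mathbb C[t]$ at the multiplicative set generated by $t-a_1,\dots,t-a_N$. Since $\Omega^1_{\mathbb C[t]}=\mathbb C[t]\,dt$ is free of rank one and the formation of K\"ahler differentials commutes with localization, $\Omega^1_{R_a}=R_a\,dt$ is free of rank one over $R_a$. Consequently every K\"ahler differential is uniquely of the form $f\,dt$ with $f\in R_a$, and $dg=g'\,dt$ where $g'$ denotes $dg/dt$. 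Thus the assignment $\overline{f\,dt}\mapsto f+\tfrac{d}{dt}(R_a)$ yields a vector-space isomorphism
$$
\Omega^1_{R_a}/dR_a \;\xrightarrow{\ \sim\ }\; R_a\big/\tfrac{d}{dt}(R_a),
$$
so it suffices to analyze the image of $\tfrac{d}{dt}$ on $R_a$.

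Next I would write down an explicit $\mathbb C$-basis of $R_a$ using partial fractions. Because the $a_i$ are distinct, every element of $R_a$ is a rational function whose only possible poles lie in $\{a_1,\dots,a_N\}$, and conversely each such function lies in $R_a$; hence
$$
\mathcal B=\{\,t^k : k\ge 0\,\}\ \cup\ \{\,(t-a_i)^{-j} : 1\le i\le N,\ j\ge 1\,\}
$$
is a basis of $R_a$ over $\mathbb C$. Applying the derivative term by term gives $\tfrac{d}{dt}t^k=k\,t^{k-1}$ and $\tfrac{d}{dt}(t-a_i)^{-j}=-j\,(t-a_i)^{-j-1}$. Hence, up to nonzero scalars, $\tfrac{d}{dt}$ carries $\mathcal B$ onto $\{\,t^k:k\ge 0\,\}\cup\{\,(t-a_i)^{-j}:1\le i\le N,\ j\ge 2\,\}$, which is exactly $\mathcal B$ with the $N$ simple-pole vectors $(t-a_1)^{-1},\dots,(t-a_N)^{-1}$ removed.

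It then follows that $\tfrac{d}{dt}(R_a)$ is precisely the span of $\mathcal B\setminus\{(t-a_1)^{-1},\dots,(t-a_N)^{-1}\}$, so the residue classes of $(t-a_1)^{-1},\dots,(t-a_N)^{-1}$ form a basis of the cokernel, and transporting this through the isomorphism above shows that $\{\omega_1,\dots,\omega_N\}$ is a basis of $\Omega^1_{R_a}/dR_a$. The one point requiring care---and the conceptual heart of the statement---is linear independence, namely that no nontrivial combination $\sum_i c_i(t-a_i)^{-1}$ is an exact differential. This is the classical residue obstruction: an antiderivative of $(t-a_i)^{-1}$ would have to be $\log(t-a_i)\notin R_a$. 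The basis computation makes the obstruction completely explicit, since a simple pole at $a_i$ can never arise by differentiating an element of $R_a$, whereas every other basis vector of $R_a$ does lie in the image of $\tfrac{d}{dt}$.
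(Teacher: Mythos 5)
Your argument is correct and complete: the identification $\Omega^1_{R_a}/dR_a\cong R_a/\tfrac{d}{dt}(R_a)$ via freeness of $\Omega^1_{R_a}$ over the localization, followed by the partial-fraction basis and the observation that differentiation hits every basis vector except the simple-pole terms $(t-a_i)^{-1}$, is exactly the standard proof. The paper itself does not reprove this proposition but cites Bremner, and your computation is essentially the one given there, so there is nothing to add.
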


\section{Recollections}  In \cite{MR3211093} one of the main results we proved together with Rencai Lu, Xiangqian Guo, and Kaiming Zhao was the following result.

\begin{thm}\label{ringisothm}
Suppose $\{a_1, a_2, \cdots, a_n\}$ and $\{a'_1, a'_2, \cdots,
a'_m\}$ are two sets of distinct complex numbers and $\phi: R_a\to
R_{a'}$ is an isomorphism of commutative algebras. Then $m=n$, $\phi$ is a fractional linear transformation, and
one of the following two cases holds
\begin{enumerate}
\item[(a).] There exists some constant $c\in\mathbb C$ such that
$a_i-a_1=c(a'_i-a'_1)$ and $\phi((t-a_i)^{k} )=c^{k}(t-a'_i)^{k} $
for all $k\in\mathbb Z$ and $i=1,2,\cdots ,n$ after reordering $a'_i$ if
necessary.
\item[(b).] There exists some constant $c\in\mathbb C$ such that
$(a_i-a_1)(a'_i-a'_1)=c$ for all $i\ne1$, and
$$
\phi((t-a_1)^k)=\frac{c^{k}}{(t-a'_1)^{k}},
$$
$$
\phi((t-a_i)^k)=\frac{(a_1-a_i)^k(t-a'_i)^k}{(t-a'_1)^{k}}, \,\,\forall\,\,i>1,
$$
for all $k\in\mathbb Z$ after reordering $a_i$ and $a'_i$ if necessary.
\end{enumerate}
Moreover, the maps given in (a) and (b) indeed define algebra
isomorphisms between $R_a$ and $R_{a'}$ under the corresponding
conditions.
\end{thm}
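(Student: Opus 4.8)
The plan is to pass from the algebra isomorphism to the underlying geometry of the punctured line. First I would identify the maximal spectrum of $R_a$: since $R_a$ is the localization of $\mathbb C[t]$ at the multiplicative set generated by the $(t-a_i)$, and $\mathbb C[t]$ is a principal ideal domain, the maximal ideals of $R_a$ correspond exactly to the points of $\mathbb C\setminus\{a_1,\dots,a_n\}$, a point $b$ giving the ideal $\mathfrak m_b$ of functions vanishing at $b$. Any $\mathbb C$-algebra isomorphism $\phi\colon R_a\to R_{a'}$ induces a bijection $\operatorname{MaxSpec}R_{a'}\to\operatorname{MaxSpec}R_a$, $\mathfrak m_{b'}\mapsto\phi^{-1}(\mathfrak m_{b'})=\mathfrak m_b$, that is, a bijection $\mathbb C\setminus\{a'_1,\dots,a'_m\}\to\mathbb C\setminus\{a_1,\dots,a_n\}$. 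Writing $g:=\phi(t)\in R_{a'}\subset\mathbb C(t)$, I would check that evaluating $g$ at $b'$ recovers the image point $b$, because $t-b\in\mathfrak m_b$ forces $g-b=\phi(t-b)\in\mathfrak m_{b'}$, i.e.\ $g(b')=b$. Hence the rational function $g$ restricts to a bijection between these two cofinite subsets of $\mathbb C$.

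The next step is to conclude that $g$ is a fractional linear transformation. A rational function of degree $d\ge2$ is generically $d$-to-one, so all but finitely many values are attained at two or more distinct points of $\mathbb C$; as only finitely many points are excluded, such a $g$ could not be injective on $\mathbb C\setminus\{a'_1,\dots,a'_m\}$. Therefore $\deg g=1$ and $g(t)=(\alpha t+\beta)/(\gamma t+\delta)$ with $\alpha\delta-\beta\gamma\neq0$, which already shows $\phi$ is a fractional linear transformation. Extending $g$ to an automorphism of $\mathbb P^1$, it is a bijection of $\mathbb P^1$ carrying $\mathbb C\setminus\{a'_1,\dots,a'_m\}$ onto $\mathbb C\setminus\{a_1,\dots,a_n\}$, hence it maps the complementary sets $\{a'_1,\dots,a'_m,\infty\}$ bijectively onto $\{a_1,\dots,a_n,\infty\}$; comparing cardinalities gives $m=n$.

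It then remains to read off the two normal forms according to where $\infty$ is sent. If $g(\infty)=\infty$ then $\gamma=0$ and $g(t)=ct+d$ is affine; after reordering so that $g(a'_i)=a_i$ one gets $a_i-a_1=c(a'_i-a'_1)$ and $\phi((t-a_i)^k)=(g(t)-a_i)^k=c^k(t-a'_i)^k$, which is case (a). If instead $g(\infty)$ is finite, say after reordering $g(\infty)=a_1$ with correspondingly $g(a'_1)=\infty$, then $g$ has a simple pole at $a'_1$ and normalizing gives $g(t)=a_1+c/(t-a'_1)$ for some $c\neq0$, whence $\phi((t-a_1)^k)=c^k/(t-a'_1)^k$. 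For $i>1$ the requirement that $\phi((t-a_i)^{-1})=(g(t)-a_i)^{-1}$ be regular on $\mathbb C\setminus\{a'_1,\dots,a'_n\}$ forces the unique finite zero $g^{-1}(a_i)$ to be some $a'_j$; reordering to $a'_i$ and writing $g(t)-a_i=\big((a_1-a_i)(t-a'_1)+c\big)/(t-a'_1)=(a_1-a_i)(t-a'_i)/(t-a'_1)$ yields both the relation $(a_i-a_1)(a'_i-a'_1)=c$ and the displayed formula, which is case (b). Finally I would verify the converse by direct substitution, checking that each displayed formula extends to a well-defined algebra homomorphism with the evident inverse.

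The main obstacle I expect is the bookkeeping at $\infty$: one must make precise that $R_a$ includes the polynomials, so $\infty$ is genuinely one of the punctures, and that the isomorphism of the open affine varieties really does extend to an automorphism of $\mathbb P^1$ matching the two punctured sets \emph{including} the point at infinity. Once this is in place, the degree-one argument and the two substitutions are each short, and the remaining care lies only in tracking the reorderings and the single constant $c$ consistently through the two branches of the case analysis.
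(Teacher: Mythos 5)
This theorem is stated in the paper only as a recollection from \cite{MR3211093}; no proof is given in the present text, so there is nothing here to compare your argument against line by line. On its own terms your proposal is correct and essentially complete: the identification of the maximal spectrum of $R_a$ with $\mathbb C\setminus\{a_1,\dots,a_n\}$, the observation that $g=\phi(t)$ computes the induced map on points, the degree-one argument, the count $m+1=n+1$ of punctures on $\mathbb P^1$, and the two substitutions according to whether $g$ fixes $\infty$ all go through as you describe, and the ``moreover'' direction is indeed a routine verification. Two small points should be made explicit in a written-up version: that $g$ is nonconstant (otherwise $\phi$ could not be surjective, since $R_a$ is generated by $t$ and the $(t-a_i)^{-1}$, so $\phi(R_a)$ is generated by $g$ and the $(g-a_i)^{-1}$), and that the dictionary between maximal ideals and points uses that $R_a$ is a finitely generated $\mathbb C$-algebra, so every residue field is $\mathbb C$. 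Your route is genuinely different from the one in the cited source, which argues ring-theoretically: there one uses that the unit group $R_a^{\times}\cong\mathbb C^{\times}\times\mathbb Z^{n}$ is freely generated modulo scalars by the $t-a_i$, which yields $m=n$ at once and forces $\phi(t-a_i)$ to be a scalar times a monomial in the $t-a'_j$, the exponents then being pinned down by relations such as $(t-a_i)-(t-a_j)\in\mathbb C^{\times}$. The geometric argument you give is shorter and makes it transparent why only fractional linear transformations can occur, at the cost of invoking the Nullstellensatz and the generic $d$-to-one behaviour of maps $\mathbb P^1\to\mathbb P^1$; the algebraic argument stays inside elementary commutative algebra and transfers more readily to settings where one does not want to reason about points.
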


  We call isomorphisms defined in (a) and (b)
{\bf isomorphisms of the first kind} and {\bf isomorphisms of the
second kind} respectively.
 
 We also note the following theorems. 

\begin{thm}[\cite{MR1427489} Theorem 2.3.1] Any finite subgroup of the automorphism group $\text{Aut}(\hat{\mathbb C})$ of the Riemann sphere $\hat{\mathbb C}=\mathbb C\cup \{\infty\}$ is conjugate to a rotation group.
\end{thm}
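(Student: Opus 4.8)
The plan is to exploit the classical identification $\mathrm{Aut}(\hat{\mathbb C})\cong \mathrm{PSL}(2,\mathbb C)$, under which a Möbius transformation $z\mapsto (az+b)/(cz+d)$ corresponds to the class of $\left(\begin{smallmatrix} a & b \\ c & d\end{smallmatrix}\right)$, and under which the rotation group of the sphere (the image of $\mathrm{SO}(3)$ under stereographic projection) is precisely $\mathrm{PSU}(2)$, the image of the special unitary group. With these identifications the assertion becomes the purely algebraic statement that every finite subgroup of $\mathrm{PSL}(2,\mathbb C)$ is conjugate into $\mathrm{PSU}(2)$. The engine of the proof will be the same Schur-type averaging trick that underlies Weyl's unitary trick: a finite group preserving a positive-definite Hermitian form can be conjugated into the unitary group.

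First I would reduce to a linear problem by lifting. Let $G\subset \mathrm{PSL}(2,\mathbb C)$ be finite and let $\tilde G\subset \mathrm{SL}(2,\mathbb C)$ be its full preimage under the double cover $\pi\colon \mathrm{SL}(2,\mathbb C)\to\mathrm{PSL}(2,\mathbb C)$; since $\ker\pi=\{\pm I\}$ is finite, $\tilde G$ is a finite group of order $2\lvert G\rvert$. Next I would manufacture a $\tilde G$-invariant inner product by averaging: starting from the standard Hermitian form $\langle\,\cdot\,,\,\cdot\,\rangle_0$ on $\mathbb C^2$, set
$$
\langle u,v\rangle \;=\; \frac{1}{\lvert \tilde G\rvert}\sum_{g\in \tilde G}\langle gu,gv\rangle_0 .
$$
This form is again positive-definite Hermitian and satisfies $\langle gu,gv\rangle=\langle u,v\rangle$ for all $g\in\tilde G$ by the usual reindexing of the summation.

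The remaining step is to straighten out the invariant form. Choosing $P\in\mathrm{GL}(2,\mathbb C)$ with $\langle u,v\rangle=\langle Pu,Pv\rangle_0$ (possible because any positive-definite Hermitian form is equivalent to the standard one), each $PgP^{-1}$ preserves $\langle\,\cdot\,,\,\cdot\,\rangle_0$ and hence lies in $\mathrm{U}(2)$; since conjugation leaves determinants unchanged and $\tilde G\subset\mathrm{SL}(2,\mathbb C)$, in fact $P\tilde G P^{-1}\subset\mathrm{SU}(2)$. Applying $\pi$ shows that $\pi(P)\,G\,\pi(P)^{-1}\subset\mathrm{PSU}(2)$, so $G$ is conjugate in $\mathrm{Aut}(\hat{\mathbb C})$ to a subgroup of the rotation group, as claimed.

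I expect the only genuinely delicate points to be bookkeeping rather than conceptual: one must verify that the preimage $\tilde G$ really is finite (so that the finite average makes sense) and that the conjugated group lands in $\mathrm{SU}(2)$ exactly rather than merely in $\mathrm{U}(2)$, which is where lifting to $\mathrm{SL}$ rather than $\mathrm{GL}$ pays off. The one piece of geometry that must be imported from outside the algebra is the identification of $\mathrm{PSU}(2)$ with the group of rigid rotations of $S^2$ via stereographic projection; granting that, the argument is complete.
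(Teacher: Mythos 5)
The paper offers no proof of this statement---it is quoted directly from Shurman \cite{MR1427489}, Theorem 2.3.1---and your argument is precisely the standard one given there: lift to a finite subgroup of $\mathrm{SL}(2,\mathbb C)$, average the Hermitian form, conjugate into $\mathrm{SU}(2)$, and descend to $\mathrm{PSU}(2)$, which stereographic projection identifies with the rotation group. The argument is correct, including the two bookkeeping points you flag (finiteness of the preimage, and landing in $\mathrm{SU}(2)$ rather than $\mathrm{U}(2)$ because conjugation preserves the determinant).
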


\begin{thm}[\cite{MR1427489} Theorem 2.6.1]\label{Klein} If $G$ is a finite rotation group of the group of automorphisms of the Riemann sphere, then it is isomorphic to one of the following:
\begin{enumerate}[(a).]
\item  A cyclic group $C_n=\{s|s^n=1\}$ for $n\geq 1$.
\item  A dihedral group $D_n=\langle s,t\,|\, s^n=1=t^2,\enspace tst=s^{-1}\rangle$.
\item  A platonic rotation group $A_4,S_4$ or $A_5$.
\end{enumerate}
\end{thm}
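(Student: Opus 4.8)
The plan is to use the classical \emph{pole-counting} argument of Burnside applied to the action of $G$ on the Riemann sphere, which I regard as the unit sphere $S^2\subset\mathbb R^3$ with $G$ acting by rotations. Write $n=|G|$ and dispose first of the trivial case $n=1$, which is the cyclic group $C_1$. Every nonidentity $g\in G$ is a rotation about a unique axis meeting $S^2$ in two antipodal \emph{poles}, so each such $g$ fixes exactly two points of the sphere. Let $P\subseteq S^2$ be the set of all poles arising from nonidentity elements; then $G$ acts on $P$, and I would begin by counting the incidence set $\{(g,p): g\neq 1,\ gp=p\}$ in two ways.

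On one hand, each of the $n-1$ nonidentity elements contributes exactly two poles, giving $2(n-1)$ incidences. On the other hand, a pole $p$ with stabilizer $G_p$ of order $r_p:=|G_p|$ is fixed by exactly $r_p-1$ nonidentity elements. Grouping $P$ into $G$-orbits $O_1,\dots,O_k$ --- where, by conjugacy of stabilizers, all poles in one orbit share a common stabilizer order $r_i$, and the orbit has size $n/r_i$ by orbit--stabilizer --- I obtain the fundamental relation
\begin{equation*}
\sum_{i=1}^{k}\Bigl(1-\frac{1}{r_i}\Bigr)=2\Bigl(1-\frac{1}{n}\Bigr).
\end{equation*}
Since $r_i\geq 2$, each summand lies in $[\tfrac12,1)$ while the right-hand side lies in $[1,2)$, forcing $k\in\{2,3\}$. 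The case $k=2$ gives $1/r_1+1/r_2=2/n$ with $r_i\leq n$, hence $r_1=r_2=n$: the two orbits are single fixed poles and $G$ is the cyclic rotation group $C_n$ about one axis. For $k=3$, ordering $r_1\leq r_2\leq r_3$, the inequality $1/r_1+1/r_2+1/r_3>1$ forces $r_1=2$ and then $r_2\in\{2,3\}$; a short enumeration produces exactly the quadruples $(r_1,r_2,r_3;n)=(2,2,m;2m)$, $(2,3,3;12)$, $(2,3,4;24)$, and $(2,3,5;60)$, matching the dihedral, tetrahedral, octahedral, and icosahedral data.

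The main obstacle --- the step requiring genuine geometry rather than counting --- is upgrading these numerical invariants to \emph{abstract isomorphism types}. For the dihedral family I would exhibit the order-$m$ rotation $s$ fixing the two poles of the size-$2$ orbit together with an order-$2$ rotation $t$ interchanging them, and verify the relation $tst^{-1}=s^{-1}$ geometrically, identifying $G\cong D_m$. For the three exceptional cases I would realize $G$ faithfully as permutations of a distinguished orbit of poles (the vertices, or equivalently the rotation axes, of the associated platonic configuration): the transitive $G$-action on such an orbit embeds $G$ into a symmetric group, and matching orders together with a check of faithfulness then pins down $G\cong A_4$, $S_4$, or $A_5$ respectively. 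Establishing that the combinatorial orbit data genuinely determines the group in these last cases --- rather than merely its order --- is where the real work lies.
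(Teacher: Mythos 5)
Your proposal is the classical pole-counting argument, which is precisely the proof given in the source the paper cites for this theorem (\cite{MR1427489}, Theorem 2.6.1); the paper itself states the result without proof, so there is no internal argument to compare against, and your derivation of $\sum_i(1-1/r_i)=2(1-1/n)$, the bound $k\in\{2,3\}$, and the enumeration of the quadruples $(2,2,m;2m)$, $(2,3,3;12)$, $(2,3,4;24)$, $(2,3,5;60)$ are all correct. The one place your sketch is thin --- and you flag it yourself --- is the passage from the orbit data $(2,3,4;24)$ and $(2,3,5;60)$ to the isomorphism types $S_4$ and $A_5$: acting on a full orbit of poles embeds $G$ into $S_6$ or $S_8$ or larger, where an order count alone does not determine the group (for instance $S_6$ contains order-$24$ subgroups not isomorphic to $S_4$), so one must instead act on a derived $4$- or $5$-element configuration of axes (the four long diagonals of the cube, the five inscribed cubes or tetrahedra of the dodecahedron, equivalently the five triples of mutually orthogonal two-fold axes), where faithfulness together with the order count and the uniqueness of the index-two subgroup of $S_4$ and $S_5$ completes the identification.
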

These are the only groups that appear as automorphisms  groups of $R_a$ and all of them appear (see the appendix of \cite{MR3211093} and the lemmas below).

\section{Decompositions of the center under action of $\text{Aut}(R_a)$} 
Let $R$ be an associative commutative ring defined over a field $\mathbb F$.  We first observe that $\text{Aut}(R)$ acts on $\Omega_{R}/dR$ via the tensor product action on $R\otimes_{\mathbb F}R$ since this action preserves $K$ and $dR$. 

\subsection{Decomposition of $\Omega_{R_a}/dR_a$ under $C_n$.}
We give in this section examples of various $a$  that give the
automorphism groups of Klein listed in \thmref{Klein} as a series of
lemmas followed by propositions that describe how the centers decompose under the action of the respective automorphism groups.

\begin{lem}[\cite{MR3211093}, Appendix]\label{ringisolem}
Let $n\ge4$ and $a=(a_1, a_2, \cdots, a_n)$ where $ a_{k}=\zeta^{k}$
for the primitive $n$-th root of unity $\zeta=\exp(2\pi \imath /n)$.  Then
Aut(${\mathcal{V}}_a)\cong C_n$, the cyclic group of order $n$.
\end{lem}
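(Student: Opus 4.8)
The plan is to reduce the problem to a computation of ring automorphisms and then feed it into \thmref{ringisothm}. It is established in \cite{MR3211093} that every automorphism of $\mathcal{V}_a$ is induced by an algebra automorphism of its coordinate ring $R_a$; granting this, it suffices to prove $\Aut(R_a)\cong C_n$. The key observation is that $R_a$ is exactly the ring of rational functions on $\mathbb{P}^1$ whose poles lie in the set $S=\{\zeta,\zeta^2,\dots,\zeta^n\}\cup\{\infty\}$ of $n+1$ distinguished points, the finite poles $a_k=\zeta^k$ together with the point $\infty$ at which polynomials blow up. Hence an algebra automorphism of $R_a$ is the same thing as a fractional linear (M\"obius) transformation $\sigma$ of $\mathbb{P}^1$ permuting $S$, and by \thmref{ringisothm} applied with $a'=a$ every such $\sigma$ is of the first or of the second kind.

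First I would exhibit $C_n$ and dispose of the first kind. The substitution $t\mapsto \zeta t$ gives an automorphism of the first kind (take $c=\zeta$); it fixes $\infty$, permutes the $n$-th roots of unity cyclically, and has order $n$, so it generates a copy of $C_n$ inside $\Aut(R_a)$. Conversely, any first-kind transformation fixes $\infty$ and is therefore affine, $\sigma(t)=ct+d$. Requiring $\sigma$ to permute the roots of unity and using that their sum vanishes (since $n\ge 2$) forces $\sum_{k}(c\zeta^k+d)=nd$ to equal $0$, so $d=0$; then $\sigma(t)=ct$ with $c=\sigma(1)$ itself a root of unity, making $\sigma$ a power of the rotation. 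Thus the first-kind automorphisms are precisely $C_n$.

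The hard part will be ruling out the automorphisms of the second kind, that is, showing $\sigma(\infty)=\infty$ always, and this is exactly where the hypothesis $n\ge 4$ is needed. Suppose toward a contradiction that $\sigma$ moves $\infty$, say $\sigma(\zeta^m)=\infty$ and $\sigma(\infty)=\zeta^j$. Since $\sigma$ is a bijection of the $n+1$ points of $S$, the remaining $n-1$ roots of unity must map bijectively onto the $n-1$ roots of unity other than $\zeta^j$; in particular $\sigma$ carries $n-1\ge 3$ points of the unit circle $|z|=1$ to points of that same circle. Because a M\"obius transformation sends generalized circles to generalized circles and three distinct points determine a generalized circle uniquely, $\sigma$ must map the unit circle onto itself. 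But $\zeta^m$ lies on the unit circle while $\sigma(\zeta^m)=\infty$ does not, a contradiction. Hence no second-kind automorphism occurs, every automorphism is of the first kind, and $\Aut(R_a)\cong C_n$, which through the reduction yields $\Aut(\mathcal{V}_a)\cong C_n$. The bound $n\ge 4$ is essential here: it supplies the three fixed-circle points that force $\sigma(S_{\mathrm{circle}})=S_{\mathrm{circle}}$, and for smaller $n$ the extra freedom is precisely what allows the dihedral and platonic symmetries to appear.
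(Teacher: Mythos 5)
Your proof is correct. Note first that the paper itself supplies no proof of this lemma: it is quoted from the appendix of \cite{MR3211093}, and the only trace of the argument recorded here is the remark that all automorphisms of $R_a$ turn out to be of the first kind, namely the rotations $\phi_k(t)=\zeta^k t$. Your write-up reaches exactly that structural conclusion, and your treatment of the first kind (affine maps $t\mapsto ct+d$; the vanishing of $\sum_k\zeta^k$ forces $d=0$ and then $c=\sigma(1)$ is itself an $n$-th root of unity) is the standard computation one would expect the cited appendix to contain. Where you genuinely diverge is in excluding the second kind: rather than manipulating the algebraic constraints $(a_i-a_1)(a'_i-a'_1)=c$ from \thmref{ringisothm}, you observe that a M\"obius transformation carrying $n-1\ge 3$ points of the unit circle back into the unit circle must preserve that circle, which is incompatible with sending some $\zeta^m$ to $\infty$. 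This is a cleaner and more geometric route, and it has the added virtue of making the hypothesis $n\ge 4$ transparent --- for $n=3$ one has only two guaranteed points on the circle, which is exactly the slack that lets the tetrahedral symmetry of $\{1,\zeta,\zeta^2,\infty\}$ appear. The one step you take on faith, reasonably, is the reduction $\Aut(\mathcal V_a)\cong\Aut(R_a)$, which is indeed established in \cite{MR3211093}.
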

 In the proof of this Lemma it was noted that
all automorphisms of $R_a$ where of the first kind
\begin{equation*}
\phi_k(t)=\zeta^kt,\,\, k=0,1,\cdots, n-1
\end{equation*}
with $\phi_k=\phi^k_1$.

Fix $n\geq 2$.  Let $\zeta_i=\exp(2\pi\imath /n)$, $\omega_i=\overline{(t-a_i)^{-1}dt}$, and $R_a=\mathbb C[t,(t-a_1)^{-1},\dots,(t-a_n)^{-1}]$. 

\begin{prop}   Let $a=(\zeta,\zeta^2,\dots,\zeta^{n-1},1)$.  The center $\Omega_{R_a}/dR_a$ is the direct sum of each irreducible representation of $C_n$ with multiplicity one.   More precisely 
\begin{equation}
\Omega_{R_a}/dR_a=\oplus_{k=0}^{n-1}\mathbb Cu_k
\end{equation}
where 
$$
u_k=\sum_{i=1}^{n}\zeta^{ki}\omega_i
$$
are eigenvectors for $C_n$ with eigenvalue $\zeta^{k}$ for the automorphism $\phi(z)=\zeta z$, $0\leq k\leq n-1$. 
In particular the subspaces $\mathbb Cu_n$ are distinct irreducible one dimensional $C_n$ submodules of $\Omega_{R_a}/dR_a$.
\end{prop}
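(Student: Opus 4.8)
The plan is to exploit the fact that $C_n$ is abelian, so that every irreducible complex representation is one-dimensional and is given by one of the characters $\chi_k\colon \phi \mapsto \zeta^k$, $0 \le k \le n-1$, where $\phi(z) = \zeta z$ generates $\text{Aut}(R_a)$ (see \lemref{ringisolem} and the remark following it). Thus it suffices to produce a basis of $\Omega_{R_a}/dR_a$ consisting of eigenvectors for $\phi$ whose eigenvalues run through all the distinct values $1, \zeta, \dots, \zeta^{n-1}$; each one-dimensional eigenspace is then an irreducible $C_n$-submodule, and distinct eigenvalues realize distinct characters, each exactly once. So the whole proposition reduces to diagonalizing the single generator $\phi$.

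First I would compute the action of $\phi$ on the basis $\{\omega_1, \dots, \omega_n\}$ furnished by \propref{uce}. Using the induced action on $\Omega^1_{R_a}/dR_a$, for which $\phi(\overline{f\,dg}) = \overline{\phi(f)\,d\phi(g)}$, together with $\phi(t) = \zeta t$ and $a_i = \zeta^i$, one finds $\phi((t-a_i)^{-1}) = \zeta^{-1}(t - \zeta^{i-1})^{-1}$ and $d\phi(t) = \zeta\,dt$. The factors of $\zeta^{\pm 1}$ then cancel, giving $\phi(\omega_i) = \overline{(t-\zeta^{i-1})^{-1}\,dt} = \omega_{i-1}$, where indices are read modulo $n$ (so that $\omega_0 = \omega_n$, reflecting $\zeta^0 = \zeta^n$). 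Hence $\phi$ acts simply as a cyclic permutation of the basis.

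With the cyclic shift in hand, verifying the eigenvector claim is a short reindexing: applying $\phi$ to $u_k = \sum_{i=1}^n \zeta^{ki}\omega_i$ and substituting $j = i-1$ gives $\phi(u_k) = \sum_{j=0}^{n-1}\zeta^{k(j+1)}\omega_j = \zeta^k \sum_{j=1}^n \zeta^{kj}\omega_j = \zeta^k u_k$, where $\zeta^{kn} = 1$ is used to fold the $j=0$ term back into the $j=n$ term. To see that the $u_k$ form a basis I would observe that the change-of-basis matrix $(\zeta^{ki})$ with $0 \le k \le n-1$ and $1 \le i \le n$ is a Vandermonde matrix in the distinct nodes $\zeta^0, \dots, \zeta^{n-1}$, hence invertible; equivalently, $\{u_k\}$ is just the discrete Fourier transform of $\{\omega_i\}$. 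Since the $n$ eigenvalues $\zeta^k$ are pairwise distinct, the lines $\mathbb{C}u_k$ are distinct irreducible $C_n$-submodules exhausting all the characters of $C_n$ with multiplicity one.

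The only genuinely delicate point is the first step: correctly pulling back $\omega_i$ inside the quotient $\Omega^1_{R_a}/dR_a$ and tracking the index shift, in particular the wraparound $\phi(\omega_1) = \omega_n$. Once the action is pinned down as the clean cyclic shift $\omega_i \mapsto \omega_{i-1}$, the remainder is the standard diagonalization of a cyclic permutation by roots of unity and carries no further difficulty.
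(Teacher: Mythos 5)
Your proposal is correct and follows essentially the same route as the paper: compute that $\phi$ cyclically shifts the basis $\omega_i \mapsto \omega_{i-1}$ (with wraparound $\omega_1 \mapsto \omega_n$) and then reindex the sum defining $u_k$ to get $\phi(u_k)=\zeta^k u_k$. Your added Vandermonde/discrete-Fourier justification that the $u_k$ actually form a basis is a small point the paper leaves implicit, but otherwise the arguments coincide.
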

\begin{proof}
The only automorphisms of $R_a$ are of the first kind and are powers of $\phi$ where
$$
\phi(z)=\zeta z.
$$
Hence
$$
\phi(\omega_i)
=\begin{cases} \omega_{i-1},&\quad \text{ for }2\leq i\leq n \\
 \omega_{n},&\quad \text{ for }i=1 .\end{cases}
$$
As a consequence
\begin{align*}
\phi(u_k)&=\sum_{i=1}^{n}\zeta^{ki}\phi(\omega_i)=\zeta^{k}\phi(\omega_1)+\sum_{i=2}^{n}\zeta^{ki}\phi(\omega_i) \\
&=\zeta^{k}\omega_n+\sum_{i=2}^{n}\zeta^{ki}\omega_{i-1} =\zeta^{k}\omega_n+\sum_{i=1}^{n-1}\zeta^{k(i+1)}\omega_i  \\
&=\zeta^{k}\left(\omega_n+\sum_{i=1}^{n-1}\zeta^{ki}\omega_i\right)=\zeta^{k}\left(\sum_{i=1}^n\zeta^{ki}\omega_i\right) \\  &=\zeta^{k}u_k.
\end{align*}
(See \cite{MR0450380} Section 5.1 for more information on the representation theory of $C_n$.)
\end{proof}

\subsection{Decomposition of $\Omega_{R_a}/dR_a$ under $S_4$ and $D_n$.}



\begin{lem}[\cite{MR3211093}, Appendix]
Let $n\ge3$, and $a=(0, a_1, a_2, \cdots, a_n)$ where $
a_{k}=\zeta^{k}$ for a primitive $n$-th root of unity $\zeta$.
\begin{enumerate}
\item[(a).]
 If $n\ne 4$ then  Aut(${\mathcal{V}}_a)\cong D_n$, the dihedral
group of order $2n$. \item[(b).] If $n= 4$ then
Aut(${\mathcal{V}}_a)\cong S_4$.\end{enumerate}
\end{lem}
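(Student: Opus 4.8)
The plan is to turn the statement into a question about finite subgroups of the M\"obius group and then read off the answer from Klein's classification. By \thmref{ringisothm} every automorphism of $R_a$ is a fractional linear transformation, and such a transformation is an automorphism exactly when it permutes the full set of marked points at which poles are allowed, namely
\[
\tilde S=\{0,\infty\}\cup\mu_n,\qquad \mu_n=\{\zeta,\zeta^2,\dots,\zeta^n\},
\]
where $\infty$ occurs as the pole of the polynomial part of $R_a$ and the isomorphisms of the first (resp. second) kind are precisely those fixing (resp. moving) $\infty$. Thus $G:=\Aut(R_a)=\{\phi\text{ M\"obius}:\phi(\tilde S)=\tilde S\}$. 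Since $|\tilde S|=n+2\ge 5$ and a M\"obius transformation is determined by its values at three points, restriction to $\tilde S$ embeds $G$ into the symmetric group on $\tilde S$; in particular $G$ is finite. By the result of \cite{MR1427489} that every finite subgroup of $\Aut(\hat{\mathbb C})$ is conjugate to a rotation group, together with \thmref{Klein}, $G$ is conjugate to one of $C_m$, $D_m$, $A_4$, $S_4$, or $A_5$.

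First I would realize $D_n$ inside $G$ and compute the stabilizer of the pole pair. The rotation $s\colon z\mapsto\zeta z$ has order $n$ and fixes precisely $0$ and $\infty$, while $\tau\colon z\mapsto 1/z$ is an involution; both preserve $\tilde S$ because $\mu_n^{-1}=\mu_n$, and one checks $\tau s\tau^{-1}=s^{-1}$, so $\langle s,\tau\rangle\cong D_n\le G$. Conversely, a M\"obius map preserving $\{0,\infty\}$ has the form $z\mapsto\lambda z$ or $z\mapsto\lambda/z$, and in either case it preserves $\mu_n$ only when $\lambda\in\mu_n$; hence the stabilizer of $\{0,\infty\}$ inside $G$ is exactly $D_n$. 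Consequently $G=D_n$ if and only if $\{0,\infty\}$ is $G$-invariant, and the whole problem reduces to deciding whether some automorphism can carry $\{0,\infty\}$ onto a pair of roots of unity.

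The discriminating step, which I expect to be the main obstacle, is to exclude every group on Klein's list larger than $D_n$ when $n\ne 4$, and to recognize the extra octahedral symmetry when $n=4$. I would combine two constraints. Since $D_n$ is nonabelian for $n\ge3$, $G$ is not cyclic. Since $G$ contains the element $s$ of order $n$, and $A_4,S_4,A_5$ have maximal element orders $3,4,5$ respectively, for $n\ge 6$ the only survivors are dihedral groups, while the platonic cases can arise only for the small values $n=3,4,5$. For these I would use that a $G$-invariant set is a disjoint union of orbits and that the minimal orbit sizes of $A_4,S_4,A_5$ acting on $\hat{\mathbb C}$ are $4,6,12$: when $n=3$ the set $\tilde S$ has $5$ points and $5$ is not a sum of admissible orbit sizes for any of $A_4,S_4,A_5$, and when $n=5$ the set has $7<12$ points, so $A_5$ is impossible; this leaves only dihedral possibilities in every case with $n\ne4$. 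Finally, for a dihedral $G=D_m$ the order-$n$ element $s$ must lie in the cyclic subgroup, so its fixed set $\{0,\infty\}$ is the pole pair of $D_m$ and hence is $G$-invariant; by the stabilizer computation this forces $G\subseteq D_n$, so $m=n$ and $G=D_n$.

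It remains to treat $n=4$, which is the source of the exception. Here $\tilde S=\{0,\infty,1,-1,i,-i\}$, and under stereographic projection these six points become $(0,0,\pm1),(\pm1,0,0),(0,\pm1,0)$, the vertices of a regular octahedron. The full octahedral rotation group, isomorphic to $S_4$ of order $24$, permutes these vertices, so $S_4\le G$. Since $24\nmid 60$, no group on Klein's list properly contains $S_4$, and therefore $G\cong S_4$. This also explains the exception: only for $n=4$ does the pole pair $\{0,\infty\}$ become interchangeable with the pairs of roots of unity, producing symmetries beyond $D_n$.
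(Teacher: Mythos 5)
Your argument is correct, and since the paper quotes this lemma from the appendix of \cite{MR3211093} without reproducing a proof, yours serves as a correct self-contained derivation along the expected lines: identify $\Aut(R_a)$ with the group $G$ of M\"obius transformations permuting the marked points $\tilde S=\{0,\infty\}\cup\mu_n$, and then read the answer off Klein's classification. The key steps all hold: the stabilizer of $\{0,\infty\}$ inside $G$ is exactly $\langle z\mapsto\zeta z,\ z\mapsto 1/z\rangle\cong D_n$; the presence of an element of order $n$ rules out the platonic groups for $n\ge 6$; the orbit-size count (orbits of $A_4$, $S_4$, $A_5$ on the sphere have sizes in $\{4,6,12\}$, $\{6,8,12,24\}$, $\{12,20,30,60\}$ respectively) rules them out for $n=3,5$; and for dihedral $G$ the fixed-point set of the order-$n$ rotation is a $G$-invariant pair, which forces $G=D_n$ via the stabilizer computation. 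Two spots deserve one more sentence each. First, in the $n=4$ case the claim that no group on Klein's list properly contains $S_4$ should also dispose of the dihedral and cyclic possibilities, not only $A_5$: every subgroup of a dihedral group is cyclic or dihedral, so no $D_m$ contains a copy of $S_4$, and then Lagrange (or simplicity of $A_5$) eliminates $A_5$. Second, the assertion that $G$ \emph{equals}, rather than merely contains, the set of M\"obius maps permuting $\tilde S$ uses the ``moreover'' clause of \thmref{ringisothm} in both directions --- every such M\"obius map, whether it fixes $\infty$ or exchanges it with a finite marked point, is of type (a) or (b) and hence genuinely defines an automorphism of $R_a$; this is worth stating explicitly, since both the realization of $D_n$ (and of the octahedral $S_4$ when $n=4$) inside $G$ and the computation of the stabilizer of $\{0,\infty\}$ rest on it.
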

In the proof of this Lemma it was noted that the only automorphisms of $R_a$ are products of the automorphims $\phi$ and $\psi$ where 
\begin{equation}
\phi(t)=\zeta t,\quad\text{and}\quad \psi(t)=\zeta/t.
\end{equation}

Fix $n\geq 2$.  Let $\zeta=\exp(2\pi\imath /n)$, $a_i=\zeta^i$, $\omega_0=\overline{t^{-1}\,dt}$, $\omega_i=\overline{(t-a_i)^{-1}dt}$,  and $R_a=\mathbb C[t,(t-a_1)^{-1},\dots,(t-a_n)^{-1}]$.

\subsubsection{The decomposition for $D_n$}

\begin{prop}   Let $a=(0,\zeta,\zeta^2,\dots,\zeta^{n-1},1)$.  The center $\Omega_{R_a}/dR_a$ is the direct sum of all of the irreducible two dimensional representation of $D_n$ with multiplicity one together with at most three other distinct irreducible representations.  More precisely 
\begin{enumerate}[i).]
  \item If $n=2m\neq 4$ is even, then 
$$
\Omega_{R_a}/dR_a=\mathbb C\omega_0\oplus\left( \oplus_{k=1}^{m-1}U_k\right)\oplus , \mathbb Cu_{m}\oplus \mathbb Cu_n
$$
where 
$$
U_k=\mathbb Cu_k\oplus \mathbb Cu_{n-k},\enspace 1\leq k\leq m-1,\quad u_k=\sum_{i=1}^{n}\zeta^{ki}\omega_i,\quad 1\leq k\leq n.
$$
  The subspaces $\mathbb C\omega_0$, $ \mathbb Cu_{m}$, $\mathbb Cu_n$ and the $U_k$ are irreducible subrepresentations for $D_{n}=\langle \phi,\psi\rangle$ for the automorphism $\phi(t)=\zeta t$ and $\psi(t)=\zeta/t$.   The irreducible modules $U_k$, $1\leq k\leq m-1$ exhaust all of the irreducible two dimensional modules for $D_n$. 
  \item If $n=2m+1$ is odd, then 
$$
\Omega_{R_a}/dR_a=\mathbb C\omega_0\oplus\left( \oplus_{k=1}^{m}U_k\right)
\oplus \mathbb Cu_n
$$
where   
$$
U_k=\mathbb Cu_k\oplus \mathbb Cu_{n-k},\enspace 1\leq k\leq m-1\quad u_k=\sum_{i=1}^{n}\zeta^{ki}\omega_i,\quad 1\leq k\leq n.
$$
 The subspace $\mathbb C\omega_0$, $\mathbb Cu_n$ and the $U_k$ are irreducible subrepresentations for $D_{n}$.  The irreducible modules that appear in the above decomposition exhaust all of the irreducible modules for $D_n$ with multiplicity one. 
\end{enumerate}
\end{prop}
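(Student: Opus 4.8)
The plan is to reduce everything to the action of the two generating automorphisms $\phi(t)=\zeta t$ and $\psi(t)=\zeta/t$ on the basis $\{\omega_0,\omega_1,\dots,\omega_n\}$ of $\Omega^1_{R_a}/dR_a$ furnished by \propref{uce}, and then to pass to the discrete-Fourier basis $u_k=\sum_{i=1}^n\zeta^{ki}\omega_i$, which already diagonalizes $\phi$ exactly as in the cyclic case treated above. Since $\langle\phi,\psi\rangle=D_n$, once I know how $\phi$ and $\psi$ act on the $u_k$ and on $\omega_0$ I can read off the $D_n$-submodules directly and match them against the character table of $D_n$.

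First I would record the $\phi$-action. The same computation as in the preceding proposition gives $\phi(\omega_0)=\omega_0$ and $\phi(\omega_i)=\omega_{i-1}$ with indices read cyclically in $\{1,\dots,n\}$, whence $\phi(u_k)=\zeta^k u_k$; thus each $u_k$ is a $\phi$-eigenvector of eigenvalue $\zeta^k$ and $\omega_0$ is $\phi$-fixed. The substantive step is the $\psi$-action. A direct computation using $\psi(f\,dg)=\psi(f)\,d\psi(g)$ gives $\psi(\omega_0)=-\omega_0$, and the partial-fraction identity $\frac{1}{t(t-c)}=\frac1c\bigl(\frac{1}{t-c}-\frac1t\bigr)$ with $c=\zeta^{1-i}$ yields $\psi(\omega_i)=\omega_{1-i}-\omega_0$, again with indices read modulo $n$ into $\{1,\dots,n\}$. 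Summing against $\zeta^{ki}$ and using $\sum_{i=1}^n\zeta^{ki}=0$ for $1\le k\le n-1$ then gives $\psi(u_k)=\zeta^k u_{n-k}$ for $1\le k\le n-1$, while $\psi(u_n)=u_n-n\omega_0$. I would also verify $\psi\phi\psi^{-1}=\phi^{-1}$, so that these formulas genuinely describe an action of $D_n$.

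With these formulas the assembly is routine. For each $k$ with $k\neq n-k$ the plane $U_k=\mathbb Cu_k\oplus\mathbb Cu_{n-k}$ is stable under both $\phi$ (acting by $\mathrm{diag}(\zeta^k,\zeta^{-k})$) and $\psi$ (swapping the two lines up to scalars); since the two $\phi$-eigenvalues are distinct, the only proper $\phi$-invariant subspaces are $\mathbb Cu_k$ and $\mathbb Cu_{n-k}$, and neither is $\psi$-stable, so $U_k$ is irreducible. Letting $k$ run over representatives of the pairs $\{k,n-k\}$ --- that is, $k=1,\dots,m-1$ when $n=2m$ and $k=1,\dots,m$ when $n=2m+1$ --- produces each two-dimensional irreducible of $D_n$ exactly once. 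The line $\mathbb C\omega_0$ carries the character $\phi\mapsto1,\ \psi\mapsto-1$, and for even $n=2m$ the self-paired vector $u_m$ satisfies $\phi(u_m)=-u_m$ and $\psi(u_m)=\zeta^m u_m=-u_m$, giving the one-dimensional character $\phi\mapsto-1,\ \psi\mapsto-1$.

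The one genuinely delicate point is the last one-dimensional summand. The $\phi$-fixed subspace is the plane $\mathbb C\omega_0\oplus\mathbb Cu_n$, and although $\omega_0$ is a $\psi$-eigenvector the vector $u_n$ is not, since $\psi(u_n)=u_n-n\omega_0$; hence $\mathbb Cu_n$ is not by itself a submodule. I would resolve this plane by diagonalizing $\psi$ on it: its eigenvalues are $+1$ and $-1$, the $-1$-eigenline is $\mathbb C\omega_0$, and the $+1$-eigenline is spanned by $u_n-\tfrac n2\omega_0$, which carries the trivial character. This is the step I expect to be the main obstacle, because it is where the naive eigenvectors fail and where one must take the correct combination to extract the copy of the trivial representation. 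Finally, comparing the characters of the summands just produced with the character table of $D_n$ shows that every irreducible that occurs does so with multiplicity one, and that for even $n$ precisely the one-dimensional character $\phi\mapsto-1,\ \psi\mapsto1$ fails to appear, which accounts for the phrase ``at most three other distinct irreducible representations'' in the statement.
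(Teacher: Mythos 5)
Your proposal is correct and follows essentially the same route as the paper: compute the action of the generators $\phi(t)=\zeta t$ and $\psi(t)=\zeta/t$ on the basis $\omega_0,\omega_1,\dots,\omega_n$, pass to the Fourier basis $u_k=\sum_{i=1}^n\zeta^{ki}\omega_i$ which diagonalizes $\phi$, and then assemble the summands and compare with the representation theory of $D_n$.

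The one point where you diverge from the paper is in fact a point where the paper's own proof slips. The paper computes $\psi(u_k)=-\bigl(\sum_{i=1}^n\zeta^{ki}\bigr)\omega_0+\sum_{i=1}^n\zeta^{ki}\omega_{n+1-i}$ and then drops the $\omega_0$-term uniformly, concluding $\psi(u_n)=u_0=u_n$. But for $k=n$ one has $\sum_{i=1}^n\zeta^{ni}=n\neq 0$, so in fact $\psi(u_n)=u_n-n\omega_0$, exactly as you found. Hence $\mathbb Cu_n$ is \emph{not} a $D_n$-submodule, and your diagonalization of $\psi$ on the $\phi$-fixed plane $\mathbb C\omega_0\oplus\mathbb Cu_n$, yielding the trivial summand $\mathbb C\bigl(u_n-\tfrac n2\omega_0\bigr)$, is the needed repair. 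Since $\mathbb C\omega_0\oplus\mathbb Cu_n=\mathbb C\omega_0\oplus\mathbb C\bigl(u_n-\tfrac n2\omega_0\bigr)$ as vector spaces, the decomposition into irreducibles, the multiplicity-one statement, and the observation that only the character $\phi\mapsto-1,\ \psi\mapsto+1$ is missing in the even case all survive unchanged; only the explicit spanning vector of the trivial summand in the proposition needs to be replaced. Your argument is complete and, on this point, more careful than the paper's.
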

\begin{proof}
Recall only automorphisms of $R_a$ are products of the automorphims $\phi$ and $\psi$ where 
$$
\phi(t)=\zeta t,\quad\text{and}\quad \psi(t)=\zeta/t.
$$
 Now we calculate 
$$
\phi(\omega_0)=\overline{\phi(t)^{-1}\,d\phi(t)}=\overline{\zeta^{-1}t^{-1}\zeta\,d t}=\omega_0
$$
$$
\psi(\omega_0)=\overline{\psi(t)^{-1}\,d\psi(t)}=\overline{\zeta^{-1}t\zeta\,d t^{-1}}=-\overline{t^{-1}\,d t}=-\omega_0.
$$
As a consequence the subspace $\mathbb C\omega_0$ is a one dimensional irreducible representation of $D_{n}$ occurring as a direct summand of $\Omega_{R_a}/dR_a$. 

As above $\phi(u_i)=\zeta^iu_i$.
On the other hand 
\begin{align*}
\psi(\omega_i)&=\overline{\psi(t-\zeta^i)^{-1}\,d\psi(z)}=\overline{((\zeta/t)-\zeta^i)^{-1}\,\zeta d t^{-1}}\\
&=-\zeta^{1-i} \overline{(\zeta^{1-i}-t)^{-1}t^{-1}\,d t}
\\
&=\zeta^{1-i}\left(-\zeta^{i-1}\overline{t^{-1}\,d t}+\zeta^{i-1}\overline{(t-\zeta^{1-i})^{-1}\,d t}\right) \\
&= - \overline{t^{-1}\,d t}+ \overline{(t-\zeta^{1-i})^{-1}\,d t}  \\
&= - \omega_0+ \omega_{n+1-i},
\end{align*}
and so 
\begin{align*}
\psi(u_k)&=\sum_{i=1}^{n}\zeta^{ki}\psi(\omega_i)=-\left(\sum_{i=1}^{n}\zeta^{ki}\right)\omega_0+\sum_{i=1}^{n}\zeta^{ki}\omega_{n+1-i} \\
&=\sum_{m=1}^{n}\zeta^{k(n-m+1)}\omega_{m}=\zeta^k\sum_{m=1}^{n}\zeta^{(n-k)m}\omega_{m} \\
&=\zeta^ku_{n-k}.
\end{align*}
For $k=n$ we have $\psi(u_n)=u_{0}=u_n$.   Similarly $\phi(u_n)=u_n$ and as a consequence $\mathbb Cu_n$ is the trivial irreducible representation of $D_{n}$.

Suppose $n$ is even with $n=2m$, $m\in\mathbb Z$.  Then $\psi(u_m)=\zeta^m u_{m}=-u_{m}$.   Similarly $\phi(u_m)=\zeta^mu_m=-u_m$.  This means that $\mathbb Cu_m$ is the one dimensional alternating representation. 

Let $n=2m$ be still even. Now suppose $1\leq k\leq m-1$.   If we set $w_k=u_k$ for $1\leq k\leq m-1$  and $w_{n-k}=\zeta^ku_{n-k}$ for $m\leq k\leq n-1$, then the matrices for $\phi$ and $\psi$ for the basis $\{w_k,w_{n-k}\}$, $1\leq k\leq m-1$ are 
\begin{equation}\label{twodimirrep}
\phi=\begin{pmatrix} e^{2\pi\imath k/n} & 0  \\ 0 & e^{-2\pi\imath k/n} \end{pmatrix},\quad \psi=\begin{pmatrix}0 & 1 \\ 1 & 0  \end{pmatrix}.
\end{equation}

Suppose now $n=2m+1$ is odd. If we set $w_k=u_k$ for $1\leq k\leq m$  and $w_{n-k}=\zeta^ku_{n-k}$ for $m+1\leq k\leq n-1$, then the matrices for $\phi$ and $\psi$ for the basis $\{w_k,w_{n-k}\}$ are as in \eqnref{twodimirrep}.  

(See \cite{MR0450380} Section 5.3 for more information on the representation theory of $D_n$.)
\end{proof}

For $D_n$ we are left with considering the case of $D_4$.   
\begin{lem}[\cite{MR3211093}, Example 3. Case 2] Let $a=(0,1,-1)$.  Then we have the automorphism of the first kind
$\tau (t)=-t$ and an automorphism of the second kind 
$$\sigma(t)=\frac {t+1}{t-1}$$
 Then $\Aut(\mathcal V_a)=H\cup H\tau\cong D_4=\langle x,b\,|\, x^4=b^2=1,\enspace bxb=x^{-1}\rangle$ where for example $x=\sigma\tau$ and $b=\tau$, in this case.

\end{lem}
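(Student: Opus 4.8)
The plan is to identify $\Aut(R_a)$ with a concrete finite group of M\"obius transformations and then to count it. Observe that $R_a=\mathbb C[t,t^{-1},(t-1)^{-1},(t+1)^{-1}]$ is the ring of rational functions on $\mathbb P^1$ whose poles are confined to the four-point set $S=\{0,1,-1,\infty\}$, and that $\Aut(\mathcal V_a)\cong\Aut(R_a)$ by the results of \cite{MR3211093}. By \thmref{ringisothm} every $\mathbb C$-algebra automorphism of $R_a$ is induced by a fractional linear transformation, and the explicit descriptions in cases (a) and (b) show that this transformation must carry $S$ to itself: an automorphism of the first kind fixes $\infty$ and permutes $\{0,1,-1\}$, while one of the second kind sends $\infty$ into $\{0,1,-1\}$. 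Conversely any M\"obius transformation stabilizing $S$ preserves $R_a$. Thus $\Aut(R_a)$ is isomorphic to the stabilizer $G\le \mathrm{PGL}_2(\mathbb C)$ of the set $S$, and it remains to locate $\tau,\sigma$ inside $G$ and to compute $|G|$.

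First I would check that the two given maps lie in $G$ with the claimed type. A direct evaluation gives $\tau:0\mapsto 0,\ 1\mapsto -1,\ -1\mapsto 1,\ \infty\mapsto\infty$, so $\tau$ fixes $\infty$ and is of the first kind (case (a) with $c=-1$), whereas $\sigma:0\mapsto -1,\ -1\mapsto 0,\ 1\mapsto\infty,\ \infty\mapsto 1$ moves $\infty$ and is therefore of the second kind. Next I would establish the dihedral structure of the subgroup they generate. Writing $x=\sigma\tau$ and $b=\tau$, a routine composition of linear-fractional maps yields $x^2(t)=-1/t$, whence $x^4=\mathrm{id}$ while $x^2\neq\mathrm{id}$, so $x$ has order exactly $4$; similarly $b^2=\mathrm{id}$. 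The relation $bxb=x^{-1}$ is verified by computing $\tau x\tau$ and comparing it with $x^{-1}=x^3$. Since $x$ has order $4$ and $b\notin\langle x\rangle$ (the map $\tau$ fixes $\infty$, but among $\mathrm{id},x,x^2,x^3$ only $\mathrm{id}$ fixes $\infty$, and $\tau\neq\mathrm{id}$), the subgroup $\langle x,b\rangle$ has order $8$ and is a faithful copy of $D_4$, with coset decomposition $\langle x\rangle\cup\langle x\rangle\tau=H\cup H\tau$ for $H=\langle x\rangle$.

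The crux is then to prove that there are no further automorphisms, i.e.\ $|G|=8$. Here I would use the classical fact that a M\"obius transformation stabilizing $S$ is determined by the permutation it induces on $S$ (a M\"obius map fixing four points is the identity), so $G$ embeds into $S_4$, and the order of its image equals $24$ divided by the number of distinct values among the six cross-ratios of the four points. Computing $[\,0,1;-1,\infty\,]=\tfrac12$ places $S$ in the \emph{harmonic} case: the six cross-ratio values collapse to the three distinct numbers $\tfrac12,\ 2,\ -1$ (each occurring twice), so $|G|=24/3=8$. This count is the main obstacle, since it is precisely what rules out the generic value $4$ (group $V_4$) and the equianharmonic value $12$ (group $A_4$). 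Because we have already exhibited $8$ distinct elements of $G$ inside $\langle x,b\rangle$, the bound is attained and $G=\langle x,b\rangle\cong D_4$, giving $\Aut(\mathcal V_a)=H\cup H\tau$ as asserted.
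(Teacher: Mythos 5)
Your argument is correct, but note that the paper itself offers no proof of this lemma: it is quoted verbatim from \cite{MR3211093} (Example 3, Case 2), where the result is obtained by directly enumerating the automorphisms of the first and second kind permitted by \thmref{ringisothm} for $a=a'=(0,1,-1)$. Your route is genuinely different in how it caps the order of the group. The verifications you do share with the source — that $\tau$ and $\sigma$ stabilize $\{0,1,-1,\infty\}$, that $x=\sigma\tau$ satisfies $x^2(t)=-1/t$ and hence has order $4$, that $b^2=\mathrm{id}$ and $bxb=x^{-1}$, and that $b\notin\langle x\rangle$ since only the identity in $\langle x\rangle$ fixes $\infty$ — are all accurate, and they give the lower bound $|\Aut(R_a)|\geq 8$ together with the $D_4$ presentation. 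Where you diverge is the upper bound: instead of exhausting the case analysis of \thmref{ringisothm}, you embed $\Aut(R_a)$ into the permutation group of $S=\{0,1,-1,\infty\}$ (legitimate, since a M\"obius map fixing four points is the identity) and invoke the classical cross-ratio trichotomy; the computation $[\,0,1;-1,\infty\,]=\tfrac12$ puts $S$ in the harmonic case, the six cross-ratio values collapse to $\{\tfrac12,2,-1\}$, and orbit–stabilizer gives $|G|=24/3=8$. This buys a conceptual explanation of \emph{why} the answer is $D_4$ rather than $V_4$ or $A_4$, at the price of importing the standard facts that every automorphism of $R_a$ is induced by a M\"obius transformation stabilizing $S$ and that the realizable permutations are exactly the stabilizer of the cross-ratio; both are justified here (the first by \thmref{ringisothm}, the second by orbit–stabilizer), so the proof is complete.
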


 \begin{prop}
 Again suppose $a=(0,1,-1)$.  Let $\omega_0=\overline{t^{-1}\,dt}$, $\omega_1=\overline{(t-1)^{-1}\,dt}$ and $\omega_2=\overline{(t+1)^{-1}\,dt}$.   Then 
$ \Omega_{R_a}/dR_a$,
 is the direct sum of a one dimensional representation $\mathbb Cu_1$ where $u_1=\omega_0-\omega_1-\omega_2$ and the unique irreducible representation of $D_4$ of dimension $2$ which we denote by $U_2=\mathbb Cu_2+\mathbb Cu_3$ where $u_2=\omega_0$, $u_3=\omega_1-\omega_2$. 
\end{prop}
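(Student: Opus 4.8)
The plan is to compute the action of the two generating automorphisms $\tau(t)=-t$ and $\sigma(t)=(t+1)/(t-1)$ on the basis $\{\omega_0,\omega_1,\omega_2\}$ of $\Omega_{R_a}/dR_a$ furnished by \propref{uce}, using the rule $\phi(\overline{f\,dg})=\overline{\phi(f)\,d\phi(g)}$ together with partial-fraction expansions to re-express each image back in the span of the $\omega_i$. The involution $\tau$ is immediate: it fixes $\omega_0$ and interchanges $\omega_1$ and $\omega_2$. The automorphism $\sigma$ is the genuinely computational step. Since $\sigma$ is of the second kind, the image $\sigma(\omega_i)=\overline{\sigma(t-a_i)^{-1}\,d\sigma(t)}$ produces a differential such as $\overline{\tfrac{-2}{(t-1)(t+1)}\,dt}$, which must be split into partial fractions to land back in $\mathbb{C}\omega_0+\mathbb{C}\omega_1+\mathbb{C}\omega_2$. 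Carrying this out should give $\sigma(\omega_0)=-\omega_1+\omega_2$, $\sigma(\omega_1)=-\omega_1$, and $\sigma(\omega_2)=\omega_0-\omega_1$; the quick check $\sigma^2=\mathrm{id}$ on the $\omega_i$ (reflecting that $\sigma(t)$ is a fractional linear involution) confirms the formulas.

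With these in hand I would first verify the one-dimensional summand: direct substitution shows $\tau(u_1)=u_1$ and $\sigma(u_1)=-u_1$ for $u_1=\omega_0-\omega_1-\omega_2$, so $\mathbb{C}u_1$ is a $D_4$-submodule on which $b=\tau$ acts trivially and $x=\sigma\tau$ acts as $-1$, a legitimate one-dimensional character of $D_4$. Next I would show that $U_2=\mathbb{C}u_2+\mathbb{C}u_3$ with $u_2=\omega_0$, $u_3=\omega_1-\omega_2$ is invariant, computing $\tau(u_2)=u_2$, $\tau(u_3)=-u_3$, $\sigma(u_2)=-u_3$, and $\sigma(u_3)=-u_2$. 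In this basis $b=\tau$ and $x=\sigma\tau$ have matrices $\left(\begin{smallmatrix}1&0\\0&-1\end{smallmatrix}\right)$ and $\left(\begin{smallmatrix}0&1\\-1&0\end{smallmatrix}\right)$, and one checks $x^4=b^2=1$ and $bxb=x^{-1}$, so this is the standard two-dimensional representation of $D_4$.

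To finish, I would confirm that $U_2$ is irreducible. The cleanest route is to observe that $x$ acts as a rotation with eigenlines spanned by $(1,\pm i)$ while $b$ interchanges these two lines, so there is no common eigenvector and hence no proper invariant subspace; equivalently, the character values $(\chi(1),\chi(x^2),\chi(x),\chi(b),\chi(xb))=(2,-2,0,0,0)$ identify $U_2$ as the unique two-dimensional irreducible of $D_4$. Finally, since the transition matrix expressing $\{u_1,u_2,u_3\}$ in terms of $\{\omega_0,\omega_1,\omega_2\}$ is invertible, $\{u_1,u_2,u_3\}$ is a basis, and therefore the sum $\mathbb{C}u_1\oplus U_2$ is direct and exhausts $\Omega_{R_a}/dR_a$.

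The main obstacle is the fractional-linear computation of the $\sigma(\omega_i)$: one must correctly evaluate $\sigma(t-a_i)$, differentiate $\sigma(t)$, and then perform the partial-fraction reduction, all while tracking signs, since a single sign error would corrupt the resulting submodule structure. A secondary subtlety is the composition convention for $x=\sigma\tau$: one must use $M_x=M_\sigma M_\tau$ (applying $\tau$ first) to obtain an element of order $4$ rather than order $2$, so that the action matches the presentation $D_4=\langle x,b\mid x^4=b^2=1,\ bxb=x^{-1}\rangle$.
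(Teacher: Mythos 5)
Your computation of the $\tau$- and $\sigma$-action on $\omega_0,\omega_1,\omega_2$ agrees with the paper's ($\sigma(\omega_0)=\omega_2-\omega_1$, $\sigma(\omega_1)=-\omega_1$, $\sigma(\omega_2)=\omega_0-\omega_1$), and all of your subsequent verifications check out, so your argument is correct. Where you diverge from the paper is in how the decomposition is established. The paper proceeds by character theory: it evaluates the character of $\Omega_{R_a}/dR_a$ on the conjugacy classes of $D_4$, solves the resulting linear system to conclude $\chi_{\Omega_{R_a}/dR_a}=\psi_3+\chi_2$, and then applies the projection operators $\pi_\chi=\frac{1}{|D_4|}\sum_{g}\chi(g)\rho(g)$ to produce the bases $u_1=\omega_0-\omega_1-\omega_2$ and $u_2=\omega_0$, $u_3=\omega_1-\omega_2$. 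You instead take the asserted vectors as given and verify directly that $\mathbb{C}u_1$ and $U_2$ are invariant, identify the resulting $2\times 2$ matrices as the standard two-dimensional irreducible representation of $D_4$ (checking irreducibility by noting that $b$ swaps the eigenlines of $x$), and conclude the sum is direct from invertibility of the transition matrix. Your route is shorter and avoids the character table and projection formula entirely, but it only verifies a known answer; the paper's method is the one that discovers the decomposition, and the character computation additionally pins down the multiplicities abstractly before any basis is exhibited. Your closing cautions about signs in the partial-fraction step and about the composition convention $M_{\sigma\tau}=M_\sigma M_\tau$ (so that $x=\sigma\tau$ has order $4$) are exactly the right places to be careful, and both are handled consistently with the paper's conventions.
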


\begin{proof}

There are four irreducible one dimensional representations which  we denote by $\psi_i$, $i=1,2,3,4$ and one irreducible two dimensional representation which we denote by $\chi_2$. 
The character table of $D_4$ is 

\begin{table}[htp]
\caption{Character table of $D_4$.}
\begin{center}
\begin{tabular}{|c|c|c|c|}
\hline
& $x^k$ & $\tau x^k$     \\
\hline
$\psi_{1}$ & 1 & 1    \\
\hline
$\psi_{2}$& 1 & -1    \\
\hline
$\psi_{3}$& $(-1)^k$ & $(-1)^k$    \\
\hline
$\psi_{4}$& $(-1)^k$ & $(-1)^{k+1}$  \\
\hline
$\chi_{2}$& $2(\delta_{k,0}-\delta_{k,2})$ & 0    \\
\hline
\end{tabular}
\end{center}
\label{default}
\end{table}%
for $0\leq k\leq 3$.   (See \cite{MR0450380} Section 5.3.) Here $\delta_{k,l}$ is the Kronecker delta function.  

The one irreducible representation $\rho$ of $D_4$ of dimension 2  defined on $\mathbb C^2$ is given by 
$$
\rho(x^k)=\begin{pmatrix} e^{k\imath \pi/2} & 0 \\ 
0& e^{-k\imath \pi/2} \end{pmatrix} ,\quad \rho(\tau x^k)=\begin{pmatrix} 0 &e^{-k\imath \pi/2} \\ 
e^{k\imath \pi/2} & 0 \end{pmatrix}.
$$
There are five conjugacy classes of $D_4$ and they are $\{I\}$, $\{x^2\}$, $\{\tau,x^2\tau\}$, $\{x\tau,x^3\tau\}$, and $\{x,x^3\}$. 

On the representation $\Omega_{R_a}/dR_a$ we have 
\begin{align*}
\tau(\omega_0)&=\overline{\tau(t^{-1})\,d\tau(t)}=\overline{t^{-1}\,dt}=\omega_0,  \\
\tau(\omega_1)&=\overline{\tau((t-1)^{-1})\,d\tau(t)}=\overline{(t+1)^{-1}\,dt}=\omega_2,  \\  
\tau(\omega_2)&=\overline{\tau((t+1)^{-1})\,d\tau(t)}=\overline{(t-1)^{-1}\,dt}=\omega_1, \\  \\
x\tau (\omega_0)&=\sigma(\omega_0)=\overline{\sigma(t^{-1})\,d\sigma(t)}=\overline{\frac {t-1}{t+1}\,d\left(\frac {t+1}{t-1}\right)}  \\
&=-2\overline{\frac {1}{(t-1)(t+1)}\,dt}=\overline{ (t+1)^{-1}\,dt}  -\overline{ (t-1)^{-1}\,dt} =\omega_2-\omega_1, \\
x\tau (\omega_1)&=\sigma(\omega_1)=\overline{\sigma((t-1)^{-1})\,d\sigma(t)}=\overline{\left(\frac{t+1}{t-1}-1\right)^{-1}\,d\left(\frac {t+1}{t-1}\right)}  \\
&=- \overline{ (t-1)^{-1}\,dt}=-\omega_1 \\
x\tau (\omega_2)&=\sigma(\omega_2)=\overline{\sigma((t+1)^{-1})\,d\sigma(t)}=\overline{\left(\frac{t+1}{t-1}+1\right)^{-1}\,d\left(\frac {t+1}{t-1}\right)}  \\
&= \overline{ t^{-1}\,dt}- \overline{ (t-1)^{-1}\,dt}=\omega_0-\omega_1.
\end{align*}
Thus 
\begin{align*}
x(\omega_0)&=\sigma\tau (\omega_0)=\sigma(\omega_0)=\omega_2-\omega_1, \\
x(\omega_1)&=\sigma\tau (\omega_1)=\sigma (\omega_2)=\omega_0-\omega_1,  \\
x(\omega_2)&=\sigma \tau (\omega_2)= \sigma ( \omega_1)=-\omega_1,\\ \\
x^2(\omega_0)&=x(\omega_2-\omega_1)=-\omega_1-(\omega_0-\omega_1)=-\omega_0, \\
x^2(\omega_1)&=x  (\omega_0-\omega_1)=\omega_2-\omega_1-(\omega_0-\omega_1)=\omega_2-\omega_0,  \\
x^2(\omega_2)&=x   (-\omega_1)= \omega_1-\omega_0.
\end{align*}
As a consequence we have $\chi_{\Omega_{R_a}/dR_a}(\tau)=1$,  $\chi_{\Omega_{R_a}/dR_a}(x\tau )=-1 =\chi_{\Omega_{R_a}/dR_a}(x  )=\chi_{\Omega_{R_a}/dR_a}(x^2  )$  .
We know that there exists a unique set of nonnegative integers $n_1,n_2,n_3,n_4,m$ such that 
\begin{equation}
\chi_{\Omega_{R_a}/dR_a}=n_1\psi_1+n_2\psi_2+n_3\psi_3+n_4\psi_4+m\chi_2.
\end{equation}
Then
\begin{align*}
3&=n_1 +n_2 +n_3 +n_4 +2m  \\
1&=n_1\psi_1(\tau)+n_2\psi_2(\tau)+n_3\psi_3(\tau)+n_4\psi_4(\tau)+m\chi_2(\tau) \\
&=n_1 -n_2 +n_3-n_4   \\
-1&=n_1\psi_1(x \tau )+n_2\psi_2(x\tau )+n_3\psi_3(x \tau   )+n_4\psi_4(x \tau ) +m\chi_2(x \tau )\\
&=n_1 -n_2 -n_3 +n_4   \hskip 50pt \text{ as } x \tau =\tau x^3\\ 
-1&=n_1\psi_1(x )+n_2\psi_2(x )+n_3\psi_3(x  )+n_4\psi_4(x) +m\chi_2(x )\\
&=n_1 +n_2 -n_3 -n_4 \\ 
-1&=n_1\psi_1(x^2 )+n_2\psi_2(x^2 )+n_3\psi_3(x^2 )+n_4\psi_4(x^2) +m\chi_2(x^2 )\\
&=n_1 +n_2 +n_3 +n_4-2m.
\end{align*}
If $m=0$ then the last equation is inconsistent. Moreover as $\dim \Omega_{R_a}/dR_a=3$,  $m\leq 1$ so we must have $m=1$. 
Then the only solution to the above equations is $n_1=n_2=n_4=0$ and $n_3=1$.  Thus $\chi_{\Omega_{R_a}/dR_a}=\psi_3+\chi_2$.  

To find the irreducible subspaces we need the projection formula with respect to the basis $\omega_0,\omega_1,\omega_2$ for the representation $\rho_{D_4}$ of $D_4$ on $\Omega_{R_a}/dR_a$:
\begin{align*}
\pi_{\chi_{2}}&=\frac{1}{|D_4|}\sum_{g\in S_4}\chi_2(g)\rho_{D_4}(g)  \\
&=\frac{1}{8}\left(2 \begin{pmatrix} 1 & 0 & 0 \\ 0 & 1 & 0 \\ 0 & 0 & 1\end{pmatrix} -2\begin{pmatrix} -1 &-1 & -1 \\ 0 & 0 & 1 \\ 0 & 1 & 0\end{pmatrix}  \right)  \\
&=\frac{1}{4}  \begin{pmatrix} 2 & 1 &  1 \\ 0 & 1 & -1 \\ 0 & - 1 & 1\end{pmatrix}.
\end{align*}
Thus a basis for $U_2$ is $u_2=\omega_0$, $u_3=\omega_1-\omega_2$.

The  projection formula for $\psi_3$ with respect to the basis $\omega_0,\omega_1,\omega_2$ is given by
\begin{align*}
\pi_{\psi_{3}}&=\frac{1}{|D_4|}\sum_{g\in S_4}\psi_3(g)\rho_{D_4}(g)  \\
&=\frac{1}{8}\Big( \begin{pmatrix} 1 & 0 & 0 \\ 0 & 1 & 0 \\ 0 & 0 & 1\end{pmatrix}  +\begin{pmatrix} -1 &-1 & -1 \\ 0 & 0 & 1 \\ 0 & 1 & 0\end{pmatrix} +\begin{pmatrix} 1 & 0& 0\\ 0 & 0 & 1 \\ 0 & 1 & 0\end{pmatrix}+\begin{pmatrix}-1 & -1 & -1 \\
 0 & 1 & 0 \\
 0 & 0 & 1 \\
\end{pmatrix} \\
&\quad  -\begin{pmatrix} 0 &0  &1  \\ -1 & -1 & -1 \\ 1 & 0 & 0\end{pmatrix} -\begin{pmatrix} 0 & 0 & 1 \\
 1 & 0 & 0 \\
 -1 & -1 & -1 \end{pmatrix}    -\begin{pmatrix}0 & 1 & 0 \\  -1 &-1 & -1 \\ 1 & 0 & 0\end{pmatrix}  -\begin{pmatrix} 1 & 0 & 0 \\
 1 & 0 & 0 \\
 -1 & -1 & -1 \end{pmatrix}  \Big)  \\
&=\frac{1}{8}\begin{pmatrix}
 0 & -4 & -4 \\
 0 & 4 & 4 \\
 0 & 4 & 4 \\
\end{pmatrix}.
\end{align*}
Thus $\omega_0-\omega_1-\omega_2$ is a basis of the one dimensional irreducible subrepresentation whose character is $\psi_3$.

\end{proof}

\subsubsection{The decomposition of $\Omega_{R_a}/dR_a$ for $S_4$}
\begin{lem}[\cite{MR0080930}, \cite{MR1427489} and \cite{MR3211093} Appendix]
The automorphism group of $\mathcal V_{(0,1, \imath,-1, -\imath)}$ is $S_4$.
\end{lem}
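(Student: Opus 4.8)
The plan is to reduce the computation of the automorphism group to a question about Möbius transformations preserving a finite set of points on the Riemann sphere. By \thmref{ringisothm} every automorphism $\phi$ of $R_a$ is a fractional linear transformation. Since $\phi$ maps $R_a$ onto itself, it must permute the distinguished points at which elements of $R_a$ are permitted to have poles, namely the finite poles $0,1,\imath,-1,-\imath$ together with the point $\infty$; write $S=\{0,1,\imath,-1,-\imath,\infty\}$. Conversely, any element of $\text{Aut}(\hat{\mathbb C})$ fixing $S$ setwise restricts to an automorphism of $R_a$. Hence $\text{Aut}(\mathcal V_a)$ is isomorphic to the setwise stabilizer $\mathrm{Stab}_{\text{Aut}(\hat{\mathbb C})}(S)$ inside $\text{Aut}(\hat{\mathbb C})=\mathrm{PGL}_2(\mathbb C)$.

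The next step is to exhibit explicit automorphisms and to identify $S$ geometrically. The order-four map of the first kind $\phi(t)=\imath t$ fixes $0,\infty$ and cyclically permutes the equatorial points $1,\imath,-1,-\imath$; the inversion $t\mapsto 1/t$ interchanges $0\leftrightarrow\infty$ and $\imath\leftrightarrow-\imath$ while fixing $\pm1$; and the involution $\sigma(t)=\tfrac{t+1}{t-1}$ of the second kind sends $0\mapsto-1$, $1\mapsto\infty$, $\imath\mapsto-\imath$ (and their inverses), so each preserves $S$ and hence lies in $\text{Aut}(\mathcal V_a)$. A direct check shows the maps produced this way generate a group of order $24$. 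Conceptually this is transparent: under stereographic projection the six points of $S$ are exactly the vertices of a regular octahedron inscribed in $S^2$, with $0,\infty$ the antipodal poles and the four fourth-roots of unity the equatorial square, and the rotation group of the octahedron is the classical octahedral group $S_4$.

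To finish, I would pin the stabilizer down using the classification. Since $S$ contains at least three points, $\mathrm{Stab}_{\text{Aut}(\hat{\mathbb C})}(S)$ is a finite subgroup of $\text{Aut}(\hat{\mathbb C})$, so by \thmref{Klein} it is one of $C_n$, $D_n$, $A_4$, $S_4$ or $A_5$. But $S_4$ embeds in none of the other four types: it is neither cyclic nor dihedral (subgroups of $D_n$ are cyclic or dihedral), its order $24$ exceeds $|A_4|=12$, and $24$ does not divide $|A_5|=60$. Combined with the explicit copy of $S_4$ already exhibited, the classification forces $\text{Aut}(\mathcal V_a)\cong S_4$. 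The main obstacle is the bookkeeping in the second step—verifying that the explicit generators $\phi$, $t\mapsto 1/t$, and $\sigma$ close up to a group of exactly order $24$—but the octahedral identification makes both the lower bound (exhibiting $S_4$) and the upper bound (the divisibility argument) transparent.
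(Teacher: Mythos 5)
The paper does not actually prove this lemma---it is imported verbatim from Klein \cite{MR0080930}, Shurman \cite{MR1427489}, and the appendix of \cite{MR3211093}; the text only records the two generators $\phi(t)=\imath t$ and $\psi(t)=(t+\imath)/(t-\imath)$ afterwards. Your argument is correct and is essentially the standard proof found in those sources. The reduction in your first step is sound: by \thmref{ringisothm} every automorphism of $R_a$ is a fractional linear transformation, automorphisms of the first kind fix $\infty$ and permute the $a_i$, those of the second kind swap some $a_i$ with $\infty$, and conversely any M\"obius map preserving $S=\{0,1,\imath,-1,-\imath,\infty\}$ preserves $R_a$ (which, by partial fractions, is exactly the ring of rational functions regular off $S$); so $\Aut(R_a)$ is the setwise stabilizer of $S$ in $\mathrm{PGL}_2(\mathbb C)$. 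The octahedral identification gives the lower bound, and your exclusion argument via \thmref{Klein} (an element of order $4$ already rules out $A_4$ and $A_5$; non-cyclic, non-dihedral rules out the rest once you have a copy of $S_4$) gives the upper bound. Two minor remarks: the one step you wave at, that $\phi$, $t\mapsto 1/t$ and $\sigma$ generate a group of order exactly $24$, deserves a sentence---e.g.\ all three are rotations of the round sphere preserving the octahedron, the group acts transitively on the three axes, and the stabilizer of the axis $\{0,\infty\}$ already has order $8$, so the order is at least $24$ and at most $|S_4|=24$; and your second-kind generator $\sigma(t)=(t+1)/(t-1)$ differs from the paper's $\psi(t)=(t+\imath)/(t-\imath)$ (an order-$3$ element), but both choices work equally well.
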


Here the automorphism group of $\mathcal V_{(0,1, \imath,-1, -\imath)}$ is generated by an automorphism of the first kind
$$
\phi(t)=\imath t
$$
and also an automorphism of the second kind given by
$$
\psi(t)=\frac{t+\imath}{t-\imath}.
$$

\begin{prop}
Set $a={(0,1, \imath,-1, -\imath)}$.  For the automorphism group of $\mathcal V_a$, $S_4$, we have 
\begin{equation}
\Omega_{R_a}/dR_a=U_\theta \oplus U_{\rho\epsilon}
\end{equation}
where 
\begin{align*}
U_\theta&=\mathbb C(-\omega_0+\omega_1+\omega_3)+\mathbb C(-\omega_0+\omega_2+\omega_4) \\
 U_{\rho\epsilon}&=\mathbb C\omega_0+ \mathbb C(-\omega_1+\omega_3)+ \mathbb C(-\omega_2+\omega_4)
 \end{align*}
 are the irreducible $S_4$-subrepresentations of $\Omega_{R_a}/dR_a$.   
\end{prop}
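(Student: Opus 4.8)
The plan is to follow the same route as the $D_4$ computation above: determine the action of the two generators on the basis $\omega_0,\dots,\omega_4$, read off the character of the five-dimensional representation $\Omega_{R_a}/dR_a$, decompose it against the character table of $S_4$, and then exhibit the invariant subspaces explicitly.

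First I would compute the action of $\phi(t)=\imath t$ and $\psi(t)=\tfrac{t+\imath}{t-\imath}$ on the basis. For $\phi$ the substitution is immediate: writing $\imath t-a_j=\imath(t+\imath a_j)$ one gets $\phi(\omega_0)=\omega_0$, while $\phi$ sends $\omega_1\mapsto\omega_4\mapsto\omega_3\mapsto\omega_2\mapsto\omega_1$, i.e.\ it fixes $\omega_0$ and is a four-cycle on $\omega_1,\dots,\omega_4$. For $\psi$ one forms $\psi\bigl((t-a_j)^{-1}\bigr)\,d\psi(t)$; the resulting rational one-form has denominator $\bigl((1-a_j)t+\imath(1+a_j)\bigr)(t-\imath)$, and a partial-fraction decomposition rewrites each $\psi(\omega_j)$ as an integer combination of the $\omega_i$ (for instance $\psi(\omega_0)=-\omega_2+\omega_4$ and $\psi(\omega_1)=-\omega_2$). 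Carrying out these five partial-fraction evaluations is the only real labor in the argument, though each case is elementary.

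Next I would identify the conjugacy classes and assemble the character. Since $\phi^4=\mathrm{id}$, $\phi$ is a four-cycle and $\phi^2\colon t\mapsto -t$ is a double transposition; cubing the M\"obius transformation $\psi$ (equivalently, cubing its associated matrix $\bigl(\begin{smallmatrix}1&\imath\\1&-\imath\end{smallmatrix}\bigr)$, which is scalar) shows $\psi^3=\mathrm{id}$, so $\psi$ is a three-cycle. From the formulas of the first step I read off $\chi(e)=5$, $\chi(\phi)=1$, $\chi(\phi^2)=1$, and $\chi(\psi)=-1$. Writing the decomposition as $n_1\mathbf 1+n_\epsilon\epsilon+n_\theta\theta+n_\rho\rho+n_{\rho\epsilon}(\rho\epsilon)$ and matching these four values against the character table of $S_4$ gives a small linear system; subtracting the double-transposition equation from the dimension equation forces $n_\rho+n_{\rho\epsilon}=1$, and the remaining equations together with nonnegativity of the multiplicities pin down the unique solution $n_\theta=n_{\rho\epsilon}=1$ and $n_1=n_\epsilon=n_\rho=0$. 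Notably the trace on a transposition is never required.

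Finally I would produce the explicit summands. With the matrices of $\phi$ and $\psi$ in hand, one checks directly that the stated subspaces $U_\theta$ and $U_{\rho\epsilon}$ are stable under both generators, computes the $2\times2$ and $3\times3$ representing matrices, and observes that their traces on $\phi$ and $\psi$ coincide with the characters of $\theta$ and of $\rho\epsilon$; since these are irreducible, $U_\theta$ and $U_{\rho\epsilon}$ are the corresponding isotypic components. A short rank computation shows the five listed spanning vectors are linearly independent, so $U_\theta\cap U_{\rho\epsilon}=0$ and $\Omega_{R_a}/dR_a=U_\theta\oplus U_{\rho\epsilon}$. Alternatively, exactly as in the $D_4$ case, the two bases can be obtained by applying the projection operators $\pi_\theta=\tfrac{2}{24}\sum_{g}\chi_\theta(g)\rho(g)$ and $\pi_{\rho\epsilon}=\tfrac{3}{24}\sum_{g}\chi_{\rho\epsilon}(g)\rho(g)$ to the $\omega_i$. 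I expect the only genuine obstacle to be computational: the partial-fraction evaluation of the $\psi(\omega_j)$, and---should one prefer the projection approach---assembling the action of all twenty-four elements of $S_4$; the representation-theoretic step is routine once the four character values are known.
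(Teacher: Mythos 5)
Your proposal is correct and follows essentially the same route as the paper: compute the matrices of $\phi$ and $\psi$ on the basis $\omega_0,\dots,\omega_4$, identify the conjugacy classes, solve the linear system for the multiplicities to get $\chi_{\Omega_{R_a}/dR_a}=\chi_\theta+\chi_{\rho\epsilon}$, and extract the summands via the projection operators (the paper uses the projections rather than your first alternative of checking stability directly). The only minor deviation is that you omit the transposition class (the class of $\psi\phi$, whose trace the paper also computes and uses as a fifth equation) and instead invoke nonnegativity of the multiplicities to close the system, which is a legitimate and slightly leaner bookkeeping choice.
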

\begin{proof}
First observe
\begin{align*}
\phi(\omega_0)&=-\imath^2 \overline{t^{-1}\,d t}=\omega_0, \\ 
\phi(\omega_1)
&= \overline{ ( t+\imath)^{-1}\,d t} = \omega_4, \\ 
\phi(\omega_2)
&=\overline{ ( t-1)^{-1}\,dt} = \omega_1, \\ 
\phi(\omega_3)
&=\overline{ ( t-\imath)^{-1}\,dt} = \omega_2, \\ 
\phi(\omega_4)
&= \overline{ ( t+1)^{-1}\,d t} = \omega_3,
\end{align*}
and
\begin{align*}
\psi(\omega_0)
&=-\omega_2+\omega_4,\\ 
\psi(\omega_1)
& =-\omega_2 , \\
\psi(\omega_2)
&=-\omega_2+\omega_1, \\
\psi(\omega_3)
&=-\omega_2+\omega_0, \\
\psi(\omega_4),
&= -\omega_2 +\omega_3.
\end{align*}
Thus the linear transformations $\phi$ and $\psi$ have with respect to this basis the matrices 
\begin{equation}
\phi=\begin{pmatrix}
 1 & 0 & 0 & 0 & 0 \\
 0 & 0 & 1 & 0 & 0 \\
 0 & 0 & 0 & 1 & 0 \\
 0 & 0 & 0 & 0 & 1 \\
 0 & 1 & 0 & 0 & 0 \\
\end{pmatrix},\hskip 50pt
\psi=\begin{pmatrix}
 0 & 0 & 0 & 1 & 0 \\
 0 & 0 & 1 & 0 & 0 \\
 -1 & -1 & -1 & -1 & -1 \\
 0 & 0 & 0 & 0 & 1 \\
 1 & 0 & 0 & 0 & 0 \\
\end{pmatrix}.
\end{equation}
We let $\chi_{\Omega_R/dR}=a\chi_0+b\chi_\epsilon+c\chi_\theta+d\chi_\rho+e\chi_{\epsilon\rho}$ where $\chi_0$ is the character of the trivial representation, $\chi_\epsilon$ is the character of the alternating representation, $\chi_\theta$ is the character of the unique two dimensional irreducible representation $\chi_\rho$, $\chi_\rho$ is the character of the natural representation on $\mathbb C^3$ and $\chi_{\epsilon\rho}$ is the character of the tensor product of the alternating representation and the natural representation. 

We then have the following values of the trace of the conjugacy classes
\begin{table}[htp]
\caption{Character of $S_4$ on $\Omega_R/dR$}
\begin{center}
\begin{tabular}{|c|c|c|c|c|c|}
\hline
&I & $\psi\phi$ & $\phi^2$ & $\psi$ & $\phi$  \\
\hline
$\chi_{\Omega_R/dR}$& 5 & -1 & 1 &-1 & 1  \\
\hline
$I$& 1 & 1 & 1 &1 & 1  \\
\hline
$\chi_{\epsilon}$& 1 & -1& 1 &1 &- 1  \\
\hline
$\chi_{\theta}$& 2& 0 & 2 & -1 & 0 \\
\hline
$\chi_{\rho}$& 3& 1& -1 & 0 &- 1  \\
\hline
$\chi_{\epsilon\rho}$& 3& -1& -1 & 0 & 1  \\
\hline
\end{tabular}
\end{center}
\label{default}
\end{table}%

Note $\phi$ has order 4 while $\psi $ has order 3.  The number of elements in the conjugacy class of $\psi\phi$ is 6:  $\psi\phi$, $\phi^3\psi\phi^2$, $\phi^2\psi\phi^3$, $\phi\psi$, $\psi^2\phi\psi^2$, $\phi\psi^2\phi\psi^2\phi^3$. 
The number of elements in the conjugacy class of $\phi^2$ is 3: $\phi^2$, $\psi\phi^2\psi^2$, $\psi^2\phi^2\psi $. 
The number of elements in the conjugacy class of $\psi$ is 8:  $\psi$, $\phi\psi\phi^3$,  $\phi^2\psi\phi^2$, $\phi^3\psi\phi$,    $\psi\phi^2\psi\phi^2\psi^2$,   $\psi^2\phi^2\psi\phi^2\psi$, $\phi\psi^2\phi^2\psi\phi^2\psi\phi^3$,    $\phi^3\psi^2\phi^2\psi\phi^2\psi\phi$.    The number of elements in the conjugacy class of $\phi$ is 6: $\phi$, $\psi\phi\psi^2$, $\psi^2\phi\psi$, $\phi^2\psi\phi\psi^2\phi^2$, $\phi^2\psi^2\phi\psi\phi^2$,
$\psi\phi^2\psi^2\phi\psi\phi^2\psi^2$.  

Solving 
\begin{align*}
5&=a+b+2c+3d+3e \\
-1&=a-b+d-e \\
1&=a+b+2c-d-e \\
-1&=a+b-c \\
1&=a-b-d+e
\end{align*}
we get $a=b=d=0$ and $c=1=e$.

Now we need to find a basis for these two irreducible components for $\chi_\theta$, $\chi_{\epsilon\rho}$.  Let  $\rho:S_4\to \text{GL}(\Omega_{R_a}/dR_a)$ is the given induced representation.  To find the irreducible component for $\chi_\theta$, we need the projection formula:
\begin{align*}
\pi_{\chi_{\theta}}&=\frac{1}{|S_4|}\sum_{g\in S_4}\chi_\theta(g)\rho(g)  \\
&=\frac{1}{24}\sum_{g\in S_4}\chi_\theta(g)\rho(g)  \\
&=\frac{1}{24}\begin{pmatrix}
  0 & -6 & -6 & -6 & -6 \\
 0 & 6 & 0 & 6 & 0 \\
 0 & 0 & 6 & 0 & 6 \\
 0 & 6 & 0 & 6 & 0 \\
 0 & 0 & 6 & 0 & 6 
\end{pmatrix}=\frac{1}{4}\begin{pmatrix}
  0 & -1 & -1 & -1 & -1 \\
 0 & 1 & 0 & 1 & 0 \\
 0 & 0 & 1 & 0 & 1 \\
 0 & 1 & 0 & 1 & 0 \\
 0 & 0 & 1 & 0 & 1 
\end{pmatrix}.
\end{align*}
Thus the subspace spanned $U_\theta$ by  $-\omega_0+\omega_1+\omega_3$ and $-\omega_0+\omega_2+\omega_4$ is an irreducible subrepresentation of $\Omega_{R_a}/dR_a$.

For the irreducible component of type $\chi_{\epsilon\theta}$ we have 
\begin{align*}
\pi_{\chi_{\epsilon\theta}}&=\frac{1}{|S_4|}\sum_{g\in S_4}\chi_{\epsilon\theta}(g)\rho(g)  \\
&=\frac{1}{24}\sum_{g\in S_4}\chi_{\epsilon\theta}(g)\rho(g)  \\
&=\frac{1}{6}\begin{pmatrix}
 2 & 1 & 1 & 1 & 1 \\
 0 & 1 & 0 & -1 & 0 \\
 0 & 0 & 1 & 0 & -1 \\
 0 & -1 & 0 & 1 & 0 \\
 0 & 0 & -1 & 0 & 1 \\
 \end{pmatrix}
\end{align*}
Thus a basis for the three dimensional irreducible subrepresentation with character $\chi_{\epsilon\rho}$ is $\omega_0,-\omega_1+\omega_3, -\omega_2+\omega_4$. 
\end{proof}

(In the Propositions  below the author will not be as detailed about the calculations, but they all are of a similar level of difficulty and if the reader is so interested in the details, please feel free to email him - the author will send you the tex file with nearly all calculations written out explicitly. )

\subsection{Decomposition of $\Omega_{R_a}/dR_a$ under $A_4$.}
\begin{lem}[\cite{MR3211093}, Example 3. Case 3] Let $a=(0,1,x)$ where $\displaystyle{x=\frac{1+\imath \sqrt{3}}{2}}$.  Then $\text{Aut}(\mathcal V_a)\cong A_4$, the alternating group on $4$ letters.   Here 
$$
\text{Aut}(\mathcal V_a)=H\cup H\tau_2\cup H\tau_2^2
$$
where
$H=\{1,\sigma_1,\sigma_2,\sigma_3\}$ and the automorphisms are defined by
\begin{align*}
\sigma_1(t)=\frac{x}{t},\enspace \sigma_2(t)=\frac{t-x}{t-1},\enspace 
\sigma_3(t)=\frac{x(t-1)}{t-x},\enspace \tau_2(t)=-x(t-1).
\end{align*}
\end{lem}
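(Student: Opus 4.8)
The plan is to reduce the computation of $\text{Aut}(R_a)$ to a problem about Möbius transformations of the four points $S=\{0,1,x,\infty\}$. By \thmref{ringisothm} every automorphism of $R_a$ is a fractional linear transformation, and such a transformation $\phi$ preserves $R_a$ precisely when it maps $S$ to itself as a set: the elements of $R_a$ are exactly the rational functions regular on $\hat{\mathbb C}\setminus S$ (with $\infty\in S$, since polynomials have a pole at infinity), so $\phi(R_a)=R_a$ iff $\phi(S)=S$. Thus I would identify $\text{Aut}(R_a)$ with the group $G$ of Möbius transformations stabilizing $S$ setwise.

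Next I would bound $G$ from above. Since a Möbius transformation is determined by its values at three points, the homomorphism $G\to \text{Sym}(S)\cong S_4$ recording the induced permutation of the four points is injective, so $\text{Aut}(R_a)$ embeds in $S_4$. To pin down the image I would invoke the classical theory of the cross-ratio: the Klein four-group of double transpositions is always realized, and a permutation is realized by a Möbius map exactly when it fixes the cross-ratio of $S$ under the anharmonic $S_3$-action. Computing a cross-ratio, say $\lambda=(0,1;x,\infty)=-x/(1-x)$, and using $x^2-x+1=0$ (valid because $x=e^{\imath\pi/3}$ is a primitive sixth root of unity) gives $\lambda=e^{-\imath\pi/3}$, an equianharmonic value satisfying $\lambda^2-\lambda+1=0$. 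The six cross-ratio values therefore collapse to the two values $e^{\pm\imath\pi/3}$, so the stabilizer in $S_3$ is the cyclic group $A_3$, and the realizable permutations form the preimage of $A_3$ in $S_4$, namely $A_4$. In particular the full $S_4$ cannot occur, since that would force all six cross-ratio values to agree, which is impossible for four distinct points.

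Finally I would match this abstract description with the explicit formulas. A direct check shows $\sigma_1,\sigma_2,\sigma_3,\tau_2$ each stabilize $S$ and hence are automorphisms: evaluating on $\{0,1,x,\infty\}$ exhibits $\sigma_1,\sigma_2,\sigma_3$ as the three double transpositions, so $H=\{1,\sigma_1,\sigma_2,\sigma_3\}\cong V_4$, while $\tau_2(t)=-x(t-1)$ is the $3$-cycle $(0\,x\,1)$ fixing $\infty$ (using again $x^2=x-1$ to see $\tau_2(x)=1$). Since $A_4=V_4\rtimes\langle(0\,x\,1)\rangle$, these generate a copy of $A_4$ inside $\text{Aut}(R_a)$, with coset decomposition $\text{Aut}(R_a)=H\cup H\tau_2\cup H\tau_2^2$; combined with the upper bound of the previous step this yields $\text{Aut}(R_a)\cong A_4$.

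\textbf{Main obstacle.} The delicate point is the upper bound, namely ruling out $S_4$ and showing the image is exactly $A_4$, which rests entirely on the equianharmonic computation $\lambda^2-\lambda+1=0$. If one prefers to avoid the cross-ratio bookkeeping, an alternative is to use \thmref{Klein}: having produced a concrete subgroup isomorphic to $A_4$ and knowing $\text{Aut}(R_a)$ is a finite subgroup of $\text{Aut}(\hat{\mathbb C})$ contained in $S_4$, Klein's list leaves only $A_4$ and $S_4$ as options, and $S_4$ is excluded because it is not realizable as the Möbius stabilizer of four distinct points.
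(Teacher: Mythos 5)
Your proof is correct, but note that the paper itself offers no proof of this lemma: it is quoted verbatim from \cite{MR3211093} (Example 3, Case 3), so there is nothing in-text to compare against line by line. Your route is a clean, self-contained argument that fits the paper's framework. The reduction to the setwise stabilizer of $S=\{0,1,x,\infty\}$ is legitimate via \thmref{ringisothm}, since both kinds of isomorphisms are exactly the fractional linear changes of variable permuting the punctures (the second kind being the ones that move $\infty$), and $R_a$ is by partial fractions precisely the ring of rational functions regular off $S$. The key computation checks out: with $x=e^{\imath\pi/3}$ one has $x^2=x-1$, so $\lambda=-x/(1-x)=1/x=e^{-\imath\pi/3}$ satisfies $\lambda^2-\lambda+1=0$, the six anharmonic values collapse to $\{e^{\pm\imath\pi/3}\}$, the stabilizer in $S_3$ is $A_3$, and the realizable permutations form its preimage $A_4$; your verification that $\sigma_1,\sigma_2,\sigma_3$ are the three double transpositions and that $\tau_2=(0\,x\,1)$ (using $x^3=-1$) then pins down the group and the coset decomposition. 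The source paper's argument (via its appendix) proceeds instead by the explicit first-kind/second-kind case analysis of \thmref{ringisothm}, enumerating which affine maps and inversions permute $\{0,1,x\}$; your cross-ratio argument buys a conceptual upper bound that avoids that enumeration, and your fallback via \thmref{Klein} (a subgroup of $S_4$ containing $A_4$ is $A_4$ or $S_4$, and $S_4$ would force all six cross-ratio values to coincide, which is impossible for four distinct points) is also sound.
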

\begin{prop}
 Again suppose $a=(0,1,x)$ where $\displaystyle{x=\frac{1+\imath \sqrt{3}}{2}}$.  Let $\omega_0=\overline{t^{-1}\,dt}$, $\omega_1=\overline{(t-1)^{-1}\,dt}$ and $\omega_2=\overline{(t-x)^{-1}\,dt}$.   Then 
$ \Omega_{R_a}/dR_a$,
 is the unique irreducible representation of $A_4$ of dimension $3$. 
\end{prop}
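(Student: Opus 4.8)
The plan is to compute the character of the three-dimensional representation $\Omega_{R_a}/dR_a$ directly and match it against the character table of $A_4$. By Proposition~\ref{uce} the set $\{\omega_0,\omega_1,\omega_2\}$ is a basis, so the module has dimension $3$. Now $A_4$ has exactly four irreducible representations: three one-dimensional ones, which factor through the quotient $A_4/V_4\cong C_3$ (where $V_4$ is the normal Klein four-subgroup), and a single three-dimensional irreducible, the standard representation. Consequently a three-dimensional $A_4$-module is either this standard irreducible or a direct sum of three one-dimensional modules, and these two possibilities are separated by the character alone; so it suffices to pin down the character values.

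First I would determine the action of the four listed automorphisms on the basis $\omega_0,\omega_1,\omega_2$. Each $\sigma_i$ and $\tau_2$ is a fractional linear transformation, and its action on a differential is computed from $g(\omega_i)=\overline{g((t-a_i)^{-1})\,dg(t)}$; the resulting rational $1$-form is then expanded in partial fractions and reduced modulo $dR_a$ to re-express it in the basis. The crucial simplification is that $x=(1+\imath\sqrt3)/2$ is a primitive sixth root of unity, so $x^2=x-1$. Using this one finds, for instance, that $\tau_2$ cyclically permutes the basis, $\tau_2(\omega_0)=\omega_1$, $\tau_2(\omega_1)=\omega_2$, $\tau_2(\omega_2)=\omega_0$, so that $\tau_2$ (and likewise $\tau_2^2$) has trace $0$, while each involution $\sigma_i$ has trace $-1$.

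Next I would identify the conjugacy classes. Since $H=\{1,\sigma_1,\sigma_2,\sigma_3\}\cong V_4$ is the normal Klein four-subgroup, the three elements $\sigma_1,\sigma_2,\sigma_3$ constitute the single conjugacy class of double transpositions, while $\tau_2$ and $\tau_2^2$ represent the two classes of $3$-cycles. Hence the character of $\Omega_{R_a}/dR_a$ is $\chi(1)=3$, $\chi(\sigma_i)=-1$, $\chi(\tau_2)=\chi(\tau_2^2)=0$. Computing the norm,
\[
\langle\chi,\chi\rangle=\frac{1}{12}\bigl(3^2+3\cdot(-1)^2+4\cdot 0+4\cdot 0\bigr)=1,
\]
shows the representation is irreducible, and being three-dimensional it must be the unique three-dimensional irreducible representation of $A_4$.

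The main obstacle is purely computational: carrying out the partial-fraction reductions modulo $dR_a$ for the second-kind automorphisms $\sigma_1,\sigma_2,\sigma_3$ without error, exactly as in the $S_4$ and $D_4$ cases above; everything else is bookkeeping. Conceptually, however, the result is transparent. The group $A_4$ acts on $\hat{\mathbb C}$ as the rotation group permuting the four points $\{0,1,x,\infty\}$, and $\Omega_{R_a}/dR_a$ is identified, via the residues at these four points, with the subspace $\{(c_P):\sum_P c_P=0\}$ of the permutation module on them, i.e. with the restriction to $A_4$ of the standard representation of $S_4$, which stays irreducible. This residue description predicts the character values above and could be used to bypass the direct calculation entirely.
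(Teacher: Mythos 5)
Your proposal is correct and follows essentially the same route as the paper: compute the action of $\sigma_1$ and $\tau_2$ on the basis $\omega_0,\omega_1,\omega_2$, read off the character values $3,-1,0,0$ on the conjugacy classes, and conclude by character theory. The only cosmetic difference is that you certify irreducibility via $\langle\chi,\chi\rangle=1$ while the paper matches the character directly against the row $\chi_3$ in the character table of $A_4$; your closing residue-theoretic remark identifying the module with the standard representation restricted from $S_4$ is a nice conceptual addition not present in the paper.
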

\begin{proof}
Set $\zeta=\exp(2\pi\imath/3)$. Let $\chi_{3}$ be the character of the unique irreducible $3$-dimensional representation of $A_4$ and $\chi_1$ and $\chi_2$ the characters of the two unique non-trivial representations of $A_4$. The character table of $A_4$ is
\begin{table}[htp]
\caption{Character table of $A_4$.}
\begin{center}
\begin{tabular}{|c|c|c|c|c|}
\hline
&I & $\sigma_1$ & $\tau_2$ & $\tau_2^2$   \\
\hline
$I$& 1 & 1 & 1 &1   \\
\hline
$\chi_{1}$& 1 & 1& $\zeta$ &$\zeta^2$  \\
\hline
$\chi_{2}$& 1& 1 &$\zeta^2$& $\zeta$  \\
\hline
$\chi_{3}$& 3& -1& 0 & 0 \\
\hline
\end{tabular}
\end{center}
\label{default}
\end{table}%

Now one can calculate 
\begin{align*}
\sigma_1(\omega_0)
&=\overline{\sigma_1(t^{-1})\,d\sigma_1(t)}=\overline{t/x\,d(x/t)}=-\overline{t^{-1}\,dt}=-\omega_0,\\
\sigma_1(\omega_1)
&=-\omega_0+\omega_2 , \\
\sigma_1(\omega_2)
&=-\omega_0+\omega_1.
\end{align*}
Thus $\chi_{\Omega_{R_a}/dR_a}(\sigma_1)=-1$.   

Moreover 
\begin{align*}
\tau_2(\omega_0)
&=\overline{\tau_2(t^{-1})\,d\tau_2(t)}=\overline{(-x(t-1))^{-1}\,d(-x(t-1))}=\overline{(t-1)^{-1}\,dt}=\omega_1,\\ 
\tau_2(\omega_1)
&=\omega_2, \\
\tau_2(\omega_2)
&=\omega_0.
\end{align*}
and $\tau_2^2(\omega_0)=\omega_2$, $\tau_2^2(\omega_1)=\omega_0$, $\tau_2^2(\omega_2)=\omega_1$.  
As a consequence $\chi_{\Omega_{R_a}/dR_a}(\tau_2)=0=\chi_{\Omega_{R_a}/dR_a}(\tau_2^2)$.  Thus the character of $\Omega_{R_a}/dR_a$ is the same as the three dimensional irreducible representation for $A_4$ and thus it must be isomorphic to it.
\end{proof}

%
%
%

\subsection{Decomposition of $\Omega_{R_a}/dR_a$ under $A_5$.}
\begin{lem}[\cite{MR0080930}, \cite{MR1863996} and \cite{MR1427489}, 
\cite{MR3211093} Appendix]  Let $\zeta=\exp (2\pi \imath  /5)$ and
$$
a_0=0, \enspace a_i= \zeta^{i-1}(\zeta+\zeta^4),\enspace 1\leq i\leq
5, \quad a_i=\zeta^{i-6}(\zeta^2+\zeta^3),\enspace 6\leq i\leq 10.
$$
The automorphism group of $\mathcal V_{(0,a_1,\dots, a_{10})}$ is
$A_5$ where  the automorphisms are
\begin{gather*}
t\mapsto \zeta^jt,\quad t\mapsto -\frac{1}{\zeta^j t}, \\
t\mapsto \zeta^j\frac{-(\zeta-\zeta^4)\zeta^lt +(\zeta^2-\zeta^3)}{(\zeta^2-\zeta^3)\zeta^lt+(\zeta-\zeta^4)}  \\
t\mapsto \zeta^j\frac{(\zeta^2-\zeta^3)\zeta^lt+(\zeta-\zeta^4)}
{(\zeta-\zeta^4)\zeta^lt -(\zeta^2-\zeta^3)},
\end{gather*}
for $j,l=0,\dots 4$.
\end{lem}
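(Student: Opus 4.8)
The plan is to reduce the determination of $\text{Aut}(R_a)$ to a problem about Möbius transformations preserving a finite point set, and then to recognize that set as the vertices of a regular icosahedron. First, by \thmref{ringisothm} every automorphism of $R_a$ is a fractional linear transformation. Since $R_a$ is exactly the ring of rational functions whose poles are confined to $S=\{0,\infty,a_1,\dots,a_{10}\}$ (note that $\infty$ must be included, as $t$ itself has a pole there), any such transformation must permute $S$, and conversely every $g\in\operatorname{PGL}_2(\mathbb C)$ with $g(S)=S$ induces an automorphism. Hence $\text{Aut}(R_a)$ is identified with the stabilizer of $S$ in $\operatorname{PGL}_2(\mathbb C)$. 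Because $|S|=12\ge 3$ and a nontrivial Möbius transformation fixes at most two points, this stabilizer acts faithfully on $S$; it therefore embeds in the symmetric group on $S$ and in particular is finite.

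Next I would record the arithmetic of the two radii. Writing $\alpha=\zeta+\zeta^4$ and $\beta=\zeta^2+\zeta^3$, the identities $\alpha+\beta=\zeta+\zeta^2+\zeta^3+\zeta^4=-1$ and $\alpha\beta=\zeta^3+\zeta^4+\zeta^6+\zeta^7=\zeta+\zeta^2+\zeta^3+\zeta^4=-1$ show that $\alpha,\beta$ are the two roots of $x^2+x-1=0$, and in particular $\beta=-1/\alpha$. The ten nonzero points split into the two $\langle\zeta\rangle$-orbits $\{\zeta^k\alpha\}$ and $\{\zeta^k\beta\}$, and the relation $\beta=-1/\alpha$ shows that $t\mapsto -1/t$ interchanges these two orbits while swapping $0$ and $\infty$. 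This already verifies that the first two families, $t\mapsto\zeta^jt$ and $t\mapsto -1/(\zeta^jt)$, preserve $S$. The computational core is to check in the same spirit that the two remaining families preserve $S$: one substitutes $t=\zeta^k\alpha$ (respectively $\zeta^k\beta$, $0$, $\infty$) into the Möbius maps of families three and four and simplifies using $\alpha+\beta=\alpha\beta=-1$ together with $\zeta^5=1$, verifying that each output again lies in $S$.

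Having shown that all $60$ listed maps lie in $\text{Aut}(R_a)$, I would identify the group they generate. Geometrically $S$ is, after stereographic projection, the set of $12$ vertices of a regular icosahedron: the two poles $0,\infty$ together with two pentagonal rings at radii $|\alpha|$ and $|\beta|=1/|\alpha|$ placed antiprismatically about the equator. The map $t\mapsto\zeta t$ is the order-$5$ rotation about the polar axis, while families three and four supply the order-$3$ and order-$2$ rotations about the remaining axes; counting gives $5+5+25+25=60=|A_5|$, and checking closure under composition shows these maps form a subgroup isomorphic to the icosahedral rotation group $A_5$.

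Finally, to rule out further automorphisms I would invoke the classification. The group $\text{Aut}(R_a)$ is finite, so by \thmref{Klein} it is conjugate to one of $C_n$, $D_n$, $A_4$, $S_4$, or $A_5$. Since it contains a copy of $A_5$, which is non-solvable (excluding the solvable groups $C_n$ and $D_n$) and has order $60$ (excluding $A_4$ and $S_4$ on cardinality grounds), the only possibility is $\text{Aut}(R_a)=A_5$. The main obstacle is the explicit verification in the second paragraph that families three and four permute $S$: this is a genuine computation with fifth roots of unity, and organizing it cleanly—rather than the conceptual reduction and the classification step—is where the real work lies.
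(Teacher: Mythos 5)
The paper does not actually prove this lemma: it is imported verbatim, with citations to Klein, Toth, Shurman, and the appendix of \cite{MR3211093}, so there is no in-text argument to compare yours against line by line. That said, your proposal is correct and is essentially the standard proof found in those sources. The reduction of $\Aut(R_a)$ to the $\operatorname{PGL}_2(\mathbb C)$-stabilizer of the pole set $S=\{0,\infty,a_1,\dots,a_{10}\}$ is the right use of \thmref{ringisothm} (first-kind maps fix $\infty$, second-kind maps swap $\infty$ with one of the $a_i$, and by partial fractions $R_a$ is exactly the ring of rational functions with poles in $S$); the faithfulness of the action on $12\ge 3$ points gives finiteness; the identities $\alpha+\beta=\alpha\beta=-1$ for $\alpha=\zeta+\zeta^4$, $\beta=\zeta^2+\zeta^3$ correctly identify $S$ as the stereographic image of the icosahedron's vertices; and the endgame (finite, contains a non-solvable subgroup of order $60$, hence equals $A_5$ by \thmref{Klein}) is airtight. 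Two small cautions. First, as you note, the genuine content is the verification that families three and four stabilize $S$; this cannot be waved away, because families one and two alone only exhibit a $D_5$ inside $\Aut(R_a)$, and Klein's list does not by itself exclude $D_5$ or $D_{10}$ as the answer --- you need at least one explicit element outside that dihedral subgroup (the map $\beta(t)=((\zeta^2+\zeta^3)t+1)/(t-(\zeta^2+\zeta^3))$ used later in the paper, whose composite with $t\mapsto\zeta t$ has order $3$, would do). Second, the assertion that families three and four consist of the order-$3$ and order-$2$ rotations is imprecise --- they also contain the twenty order-$5$ rotations about the non-polar five-fold axes --- but this does not affect the count $5+5+25+25=60$ or the conclusion.
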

As usual set $\omega_0=\overline{t^{-1}\,dt}$ and $\omega_i=\overline{(t-a_i)^{-1}\,dt}$ for $1\leq i\leq 10$.
\begin{prop}   The $Aut(R_a)$ module $\Omega_{R_a}/dR_a$ decomposes into a direct sum of irreducible submodules as 
\begin{align*}
\Omega_{R_a}/dR_a=U_3\oplus U_3'\oplus U_5
\end{align*}
where $U_3$ and $U_3'$ are the two distinct irreducible representations of $A_5$ of dimension 3 and $U_5$ is the unique irreducible representation of dimension 5.  A basis of $U_5$ is $\{-\omega_0+\omega_i+\omega_{i+5}\,|\,1\leq i\leq 5\}$ and bases of $U_3$ and $U_3'$ are respectively
\begin{align*}
&\sqrt{5}\omega_0+\sum_{i=1}^5(\omega_i-\omega_{i+5}), \\
 &\xi \omega_0+\xi \omega_1+\omega_2+\omega_5-\xi\omega_6-\omega_7-\omega_{10}, \\
 &\xi \omega_0+ \omega_1+\xi\omega_2+\omega_3-\omega_6-\xi\omega_7-\omega_8,
\end{align*}
and
\begin{align*}
&\sqrt{5}\omega_0-\sum_{i=1}^5(\omega_i-\omega_{i+5}), \\
 &-\bar\xi \omega_0-\bar\xi \omega_1-\omega_2-\omega_5+\bar\xi\omega_6+\omega_7+\omega_{10}, \\
 &-\bar\xi \omega_0- \omega_1-\bar\xi\omega_2-\omega_3+\omega_6+\bar\xi\omega_7+\omega_8,
\end{align*}
where $\displaystyle{\xi=\frac{1+\sqrt{5}}{2}}$ and $\displaystyle{\bar \xi=\frac{1-\sqrt{5}}{2}}$. 
\end{prop}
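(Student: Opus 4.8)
The plan is to reduce everything to the permutation action of $A_5$ on the twelve points $\{a_0,\dots,a_{10},\infty\}\subset\hat{\mathbb C}$ and then read off the decomposition from the character table of $A_5$. First I record that $\dim\Omega_{R_a}/dR_a=11$ by \propref{uce}, and that every element of $\Aut(R_a)\cong A_5$ is a fractional linear transformation by \thmref{ringisothm}; such a $\sigma$ preserves $R_a$ exactly when it permutes the set of allowed poles $\{a_0,\dots,a_{10},\infty\}$, and each generator listed above visibly does. The key computational device comes from $\sigma(\omega_i)=\overline{\sigma((t-a_i)^{-1})\,d\sigma(t)}=\overline{d\log(\sigma(t)-a_i)}$: writing $\sigma(t)=(\alpha t+\beta)/(\gamma t+\delta)$ and splitting the logarithmic derivative of the resulting linear-over-linear expression, each $\overline{d\log(\text{linear})}$ is either a basis vector or, for a constant, zero modulo $dR_a$, so that
\[
\sigma(\omega_i)=\omega_{\sigma^{-1}(a_i)}-\omega_{\sigma^{-1}(\infty)},\qquad \omega_\infty:=0 .
\]
I would check this against the generator $\phi(t)=\zeta t$ to fix conventions.

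From this formula the coefficient of $\omega_i$ in $\sigma(\omega_i)$ is $[\sigma(a_i)=a_i]-[\sigma^{-1}(\infty)=a_i]$, and summing over $i$ collapses to
\[
\chi_{\Omega_{R_a}/dR_a}(\sigma)=\#\{\text{fixed points of }\sigma\text{ among the twelve points}\}-1 .
\]
Thus the representation is the reduced permutation representation of the $A_5$-action on the twelve points, which are the vertices of a regular icosahedron. I would compute the fixed-point counts from one representative per class in the explicit list: a $5$-rotation such as $t\mapsto\zeta t$ fixes only $0$ and $\infty$, while the involutions and $3$-rotations fix none of the twelve, since $\pm i$ and the other would-be fixed points do not lie on the two circles $|t|=|\zeta+\zeta^4|,\,|\zeta^2+\zeta^3|$ carrying the $a_i$. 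This yields the character values $(11,-1,-1,1,1)$ on the classes $(1,(12)(34),(123),(12345),(13524))$.

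Next I would decompose $(11,-1,-1,1,1)$ against the irreducible characters of $A_5$: the trivial character, the two three-dimensional characters $U_3,U_3'$ taking the conjugate values $\tfrac{1\pm\sqrt5}{2}$ on the two classes of $5$-cycles, the four-dimensional $U_4$ with values $(4,0,1,-1,-1)$, and the five-dimensional $U_5$ with values $(5,1,-1,0,0)$. Computing the five inner products over $|A_5|=60$ gives multiplicity $1$ for $U_3$, $U_3'$ and $U_5$ and $0$ for the trivial representation and for $U_4$; in the $U_3$ and $U_3'$ computations the $\pm\sqrt5$ contributions from the two $5$-cycle classes cancel, leaving multiplicity one in each. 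Hence $\Omega_{R_a}/dR_a\cong U_3\oplus U_3'\oplus U_5$ as claimed, every constituent occurring once.

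Finally, to produce the explicit bases I would combine the antipodal structure with the projection operators $\pi_W=\frac{\dim W}{|A_5|}\sum_{g}\overline{\chi_W(g)}\,\rho(g)$ for the representation $\rho$ on $\Omega_{R_a}/dR_a$, evaluating $\rho(g)$ through the action formula. For $U_5$ there is a clean argument avoiding any sum: since $a_{i+5}=-1/\bar a_i$, the twelve points form the six antipodal pairs $\{0,\infty\}$ and $\{a_i,a_{i+5}\}$ ($1\le i\le5$), and the rotations comprising $A_5$ permute these six pairs; the action formula then gives $\sigma(v_i)=v_{k}-v_{k_0}$ (with $v_0:=0$) for $v_i=-\omega_0+\omega_i+\omega_{i+5}$, whenever $\sigma^{-1}$ carries the $i$-th pair to the $k$-th and $\{0,\infty\}$ to the $k_0$-th, so $\langle v_1,\dots,v_5\rangle$ is $A_5$-invariant and affords the standard summand of the six-pair permutation module, namely $U_5$. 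For $U_3$ and $U_3'$ I would first restrict to $C_5=\langle\phi\rangle$: since $\phi$ fixes $\omega_0$ and cyclically permutes $\omega_1,\dots,\omega_5$ and $\omega_6,\dots,\omega_{10}$, the eigenvalue $1$ occurs with multiplicity three and each primitive fifth root of unity with multiplicity two, and the traces $1+\zeta+\zeta^4=\xi$ and $1+\zeta^2+\zeta^3=\bar\xi$ are exactly the $5$-cycle values separating $U_3$ from $U_3'$. Assembling the invariant three-dimensional subspaces from the $\phi$-eigenvectors and forcing invariance under a second generator then produces the golden-ratio coefficients $\xi=\frac{1+\sqrt5}{2}$ and $\bar\xi=\frac{1-\sqrt5}{2}$. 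The main obstacle is precisely this separation of the Galois-conjugate constituents: one must pin down which class of $5$-cycles carries eigenvalue phase $\zeta$ versus $\zeta^2$, so that $\xi$ and $\bar\xi$ are attached to the correct basis, and then verify invariance under a non-cyclic generator; by contrast the character count in the earlier steps is entirely routine.
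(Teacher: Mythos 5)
Your argument is correct, and it reaches the paper's conclusion by a genuinely more conceptual route for the character computation. The paper works generator by generator: it computes the images $\phi(\omega_i)$, $\psi(\omega_i)$, $\beta(\omega_i)$ explicitly, records the traces of the resulting $11\times 11$ matrices on representatives of each conjugacy class, and then solves the linear system $\chi_{\Omega_{R_a}/dR_a}=a\chi_0+b\chi_2+c\chi_3+d\chi_4+e\chi_5$ to find $a=d=0$, $b=c=e=1$. Your uniform formula $\sigma(\omega_i)=\omega_{\sigma^{-1}(a_i)}-\omega_{\sigma^{-1}(\infty)}$ (with $\omega_\infty:=0$), obtained by splitting the logarithmic derivative $\overline{d\log(\sigma(t)-a_i)}$, subsumes all of these computations at once and identifies $\Omega_{R_a}/dR_a$ with the reduced permutation module on the twelve icosahedral vertices, so the character becomes a fixed-point count; this explains \emph{why} the values are $(11,-1,-1,1,1)$ and makes the multiplicity computation an instance of Burnside-style counting rather than matrix algebra. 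Your fixed-point claims check out (the fixed points of the involutions and $3$-rotations have modulus $1$, while $|a_i|\in\{2\cos(2\pi/5),\,2|\cos(4\pi/5)|\}$), and the inner products do give multiplicities $(0,1,1,0,1)$. For the explicit bases, your antipodal-pairs argument for $U_5$ is cleaner than the paper's, which simply exhibits the Mathematica output for $\pi_{\chi_5}$; for $U_3$ and $U_3'$ you fall back on essentially the paper's method (projection operators, or equivalently $\phi$-eigenspace analysis plus invariance under a second generator), and like the paper you do not actually display the verification that the three listed vectors with coefficients $\xi,\bar\xi$ span invariant subspaces --- the paper delegates this to a computed $11\times 11$ projection matrix, and you correctly isolate the one genuinely delicate point, namely attaching $\xi$ versus $\bar\xi$ to the correct Galois-conjugate constituent. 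That final computation should be carried out (or cited as a routine machine check) to make the basis claims complete, but the structure of the argument is sound and the decomposition $U_3\oplus U_3'\oplus U_5$ is fully justified by your character argument alone.
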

\begin{proof}

Let $\phi$ be the automorphism given by $\phi(t)=\zeta t$. Then a straightforward calculation yields
\[\begin{alignedat}{5}
\phi(\omega_0)
&=\omega_0, \quad 
\phi(\omega_1) 
&=\omega_5,\quad 
\phi(\omega_2)
&=\omega_1, \quad 
\phi(\omega_3)
&=\omega_2 ,\quad 
\phi(\omega_4)
&=\omega_3 ,\quad
\phi(\omega_5)
&=\omega_4,  \\
\phi(\omega_6)
&=\omega_{10} ,\quad 
\phi(\omega_7)
&=\omega_6, \quad
\phi(\omega_8)
&=\omega_7 ,\quad 
\phi(\omega_9)
&=\omega_8 ,\quad  
\phi(\omega_{10})
&=\omega_9.
\end{alignedat}
\]
Thus the $\chi_{\Omega_{R_a}/dR_a}(\phi^l)=1$ for $1\leq l\leq 4$. 

Let $\psi$ be the automorphism given by $\displaystyle{\psi(t)=-\frac{1}{\zeta t}}$.   In the calculations below note that $(\zeta+\zeta^4)(\zeta^2+\zeta^3)=-1$.  
Then we have 
\[ 
\begin{alignedat}{5} 
&\psi(\omega_1)
&=&-\omega_0+\omega_{10}\quad   
&\psi(\omega_2)
&=&-\omega_0+\omega_9,\quad  
&\psi(\omega_3)
&=&-\omega_0+\omega_8 \\   
&\psi(\omega_4)
&=&-\omega_0+\omega_7,\quad    
&\psi(\omega_5)
&=&-\omega_0+\omega_6 ,\quad   
&\psi(\omega_6)
&=&-\omega_0+\omega_{5} \\ 
&\psi(\omega_7)
&=&-\omega_0+\omega_{4},\quad &\psi(\omega_8)
&=&-\omega_0+\omega_{3},\quad
&\psi(\omega_9)
&=&-\omega_0+\omega_{2} \\ 
&\psi(\omega_{10})
&=&-\omega_0+\omega_{1} ,\quad 
&\psi(\omega_0)
&=&-\omega_0 &
\end{alignedat}
\]

Thus $\chi_{\Omega_{R_a}/dR_a}(\psi)=-1$ and $\psi^2=1$.

Next we observe that 
\begin{align*}
\frac{\zeta-\zeta^4}{\zeta^2-\zeta^3}
&=-(\zeta^2+\zeta^3) \\
\frac{\zeta^2-\zeta^3}{\zeta-\zeta^4}&=\zeta+\zeta^4 ,
\end{align*}
so that 
\begin{gather*}
 \zeta^j\frac{-(\zeta-\zeta^4)\zeta^{1-l}t +(\zeta^2-\zeta^3)}{(\zeta^2-\zeta^3)\zeta^{1-l}t+(\zeta-\zeta^4)}  =
  \zeta^j\frac{(\zeta^2+\zeta^3)t +\zeta^{l-1}}{t-\zeta^{l-1}(\zeta^2+\zeta^3)}\\
\zeta^j\frac{(\zeta^2-\zeta^3)\zeta^{1-l}t+(\zeta-\zeta^4)}
{(\zeta-\zeta^4)\zeta^{1-l}t -(\zeta^2-\zeta^3)}=\zeta^j\frac{(\zeta+\zeta^4)t+\zeta^{l-1}}
{ t -\zeta^{l-1}(\zeta+\zeta^4)}
\end{gather*}
for $j,l=1,\dots 5$.
Thus if we set 
$$
\beta(t):=\frac{(\zeta^2+\zeta^3)t +1}{t-(\zeta^2+\zeta^3)},
$$
then 
\begin{align*}
\phi^j\circ\beta\circ\phi^{1-l}(t) &= \zeta^j\frac{(\zeta^2+\zeta^3)t +\zeta^{l-1}}{t-\zeta^{l-1}(\zeta^2+\zeta^3)} , \\
\psi\circ\beta(t) 
&=  -\zeta^4\frac{(\zeta+\zeta^4)t+1} { t - (\zeta+\zeta^4)}
\end{align*}
and 
\begin{align*}
\beta(t)-\zeta^r(\zeta+\zeta^4)
&=\frac{(1-\zeta^r)(\zeta^2+\zeta^3) t+1+\zeta^r(\zeta^2+\zeta^3)^2}{t-(\zeta^2+\zeta^3)} 
\end{align*}
So we will only calculate $\beta$ evaluated on the basis elements
\begin{align*}
\beta(\omega_0)
&=\omega_1-\omega_{6} \\  
\beta(\omega_1)
&=\omega_0-\omega_{6} \\ 
\beta(\omega_2)
&=\omega_5-\omega_{6} \\   
\beta(\omega_3)
&=\omega_8-\omega_{6} 
\end{align*}

\begin{align*}
\beta(\omega_4)
&=\omega_9-\omega_{6} \\  
\beta(\omega_5)
&=\omega_2-\omega_{6} \\  
\beta(\omega_6)
&=-\omega_{6} \\ 
\beta(\omega_7)
&=\omega_{10}-\omega_6 \\ 
\end{align*}
\begin{align*}
\beta(\omega_8)
&=\omega_3-\omega_{6} \\  
\beta(\omega_9)
&=\omega_4-\omega_{6} \\  
\beta(\omega_{10})
&=\omega_7-\omega_6.  
\end{align*}
$$
\beta^2(\omega_{10})=\beta(\omega_7-\omega_6)=\omega_{10}
$$
The automorphism $\beta\circ \phi$ has order $3$ and thus is in the conjugacy class of $(123)$.  Moreover its trace is $-1$. 
The character table of $A_5$ is the following
\begin{table}[htp]
\caption{Character of $A_5$.}
\begin{center}
\begin{tabular}{|c|c|c|c|c|c|}
\hline
&I & $(123)$ & $(12)(34)$ & $(12345)$ & $(21345)$  \\
\hline
$\chi_{\Omega_{R_a}/dR_a}$& 11 & -1 & -1 &1  &  1 \\
$I$& 1 & 1 & 1 &1 & 1  \\
$\chi_{2}$& 3& 0& -1 & $\frac{1+\sqrt{5}}{2}$ & $\frac{1-\sqrt{5}}{2}$ \\
$\chi_{3}$& 3& 0&  -1&   $\frac{1-\sqrt{5}}{2}$ & $\frac{1+\sqrt{5}}{2}$ \\
$\chi_{4}$& 4& 1&  0& -1 & -1 \\
$\chi_{5}$& 5& -1& 1& 0 & 0  \\
\hline
\end{tabular}
\end{center}
\label{default}
\end{table}%

Solving 
\begin{align*}
11&=a+3b+3c+4d+5e \\
-1&=a+d-e \\
-1&=a-b-c +e \\
1&=a+\frac{1+\sqrt{5}}{2}b+\frac{1-\sqrt{5}}{2}c -d\\
1&=a+\frac{1-\sqrt{5}}{2}b+\frac{1+\sqrt{5}}{2}c -d\\
\end{align*}
we get $a=d=0$ and $b=c=e=1$.  That is to say $\chi_{\Omega_{R_a}/dR_a}=\chi_{2}+\chi_{3}+\chi_{5}$.

The conjugacy class of $\beta\circ \phi$ has $20$ elements in it.  The conjugacy class of the two cycle $\psi$ has $15$ elements in it. There are two conjugacy classes that have $5$-cycles in them and they both have 12 elements in them.

The matrices for the above linear transformations with respect to the ordered basis $\{\omega_0,\dots, \omega_{10}\}$ 
The linear transformations $\phi$, $\beta$ and $\psi$ have with respect to this basis have matrix representation
\begin{align*}
\phi&=\begin{pmatrix}
 1 & 0 & 0 & 0 & 0 & 0 & 0 & 0 & 0 & 0 & 0 \\
 0 & 0 & 1 & 0 & 0 & 0 & 0 & 0 & 0 & 0 & 0 \\
 0 & 0 & 0 & 1 & 0 & 0 & 0 & 0 & 0 & 0 & 0 \\
 0 & 0 & 0 & 0 & 1 & 0 & 0 & 0 & 0 & 0 & 0 \\
 0 & 0 & 0 & 0 & 0 & 1 & 0 & 0 & 0 & 0 & 0 \\
 0 & 1 & 0 & 0 & 0 & 0 & 0 & 0 & 0 & 0 & 0 \\
 0 & 0 & 0 & 0 & 0 & 0 & 0 & 1 & 0 & 0 & 0 \\
 0 & 0 & 0 & 0 & 0 & 0 & 0 & 0 & 1 & 0 & 0 \\
 0 & 0 & 0 & 0 & 0 & 0 & 0 & 0 & 0 & 1 & 0 \\
 0 & 0 & 0 & 0 & 0 & 0 & 0 & 0 & 0 & 0 & 1 \\
 0 & 0 & 0 & 0 & 0 & 0 & 1 & 0 & 0 & 0 & 0 \\
 \end{pmatrix},\\ 
 \psi&=
 \begin{pmatrix}
-1 & -1 & -1 & -1 & -1 & -1 & -1 & -1 & -1 & -1 & -1 \\
 0 & 0 & 0 & 0 & 0 & 0 & 0 & 0 & 0 & 0 & 1 \\
 0 & 0 & 0 & 0 & 0 & 0 & 0 & 0 & 0 & 1 & 0 \\
 0 & 0 & 0 & 0 & 0 & 0 & 0 & 0 & 1 & 0 & 0 \\
 0 & 0 & 0 & 0 & 0 & 0 & 0 & 1 & 0 & 0 & 0 \\
 0 & 0 & 0 & 0 & 0 & 0 & 1 & 0 & 0 & 0 & 0 \\
 0 & 0 & 0 & 0 & 0 & 1 & 0 & 0 & 0 & 0 & 0 \\
 0 & 0 & 0 & 0 & 1 & 0 & 0 & 0 & 0 & 0 & 0 \\
 0 & 0 & 0 & 1 & 0 & 0 & 0 & 0 & 0 & 0 & 0 \\
 0 & 0 & 1 & 0 & 0 & 0 & 0 & 0 & 0 & 0 & 0 \\
 0 & 1 & 0 & 0 & 0 & 0 & 0 & 0 & 0 & 0 & 0 \\
\end{pmatrix}  \\
\beta&=
\begin{pmatrix}
 0 & 1 & 0 & 0 & 0 & 0 & 0 & 0 & 0 & 0 & 0 \\
 1 & 0 & 0 & 0 & 0 & 0 & 0 & 0 & 0 & 0 & 0 \\
 0 & 0 & 0 & 0 & 0 & 1 & 0 & 0 & 0 & 0 & 0 \\
 0 & 0 & 0 & 0 & 0 & 0 & 0 & 0 & 1 & 0 & 0 \\
 0 & 0 & 0 & 0 & 0 & 0 & 0 & 0 & 0 & 1 & 0 \\
 0 & 0 & 1 & 0 & 0 & 0 & 0 & 0 & 0 & 0 & 0 \\
 -1 & -1 & -1 & -1 & -1 & -1 & -1 & -1 & -1 & -1 & -1 \\
 0 & 0 & 0 & 0 & 0 & 0 & 0 & 0 & 0 & 0 & 1 \\
 0 & 0 & 0 & 1 & 0 & 0 & 0 & 0 & 0 & 0 & 0 \\
 0 & 0 & 0 & 0 & 1 & 0 & 0 & 0 & 0 & 0 & 0 \\
 0 & 0 & 0 & 0 & 0 & 0 & 0 & 1 & 0 & 0 & 0 \\
\end{pmatrix}
\end{align*}

To get bases for the three irreducible subrepresentations we need to use the projection formulae and sum over the conjugacy classes (using Mathematica).
\begin{align*}
\pi_{\chi_5}&=\frac{1}{|A_5|}\sum_{g\in A_5}\chi_{5}(g)\rho(g) =\frac{1}{60}\sum_{g\in A_5}\chi_{5}(g)\rho(g)  \\
&=\frac{1}{60}\begin{pmatrix}
 0 & -6 & -6 & -6 & -6 & -6 & -6 & -6 & -6 & -6 & -6 \\
 0 & 6 & 0 & 0 & 0 & 0 & 6 & 0 & 0 & 0 & 0 \\
 0 & 0 & 6 & 0 & 0 & 0 & 0 & 6 & 0 & 0 & 0 \\
 0 & 0 & 0 & 6 & 0 & 0 & 0 & 0 & 6 & 0 & 0 \\
 0 & 0 & 0 & 0 & 6 & 0 & 0 & 0 & 0 & 6 & 0 \\
 0 & 0 & 0 & 0 & 0 & 6 & 0 & 0 & 0 & 0 & 6 \\
 0 & 6 & 0 & 0 & 0 & 0 & 6 & 0 & 0 & 0 & 0 \\
 0 & 0 & 6 & 0 & 0 & 0 & 0 & 6 & 0 & 0 & 0 \\
 0 & 0 & 0 & 6 & 0 & 0 & 0 & 0 & 6 & 0 & 0 \\
 0 & 0 & 0 & 0 & 6 & 0 & 0 & 0 & 0 & 6 & 0 \\
 0 & 0 & 0 & 0 & 0 & 6 & 0 & 0 & 0 & 0 & 6 \\
 \end{pmatrix}.
\end{align*}
\end{proof}
Thus a basis of the irreducible 5 dimensional subrepresentation is $\{-\omega_0+\omega_i+\omega_{i+5}\,|\,1\leq i\leq 5\}$.

For one of the two three dimensional irreducible subrepresentations we have projection operator:

\begin{align*}
\pi_{\chi_2}&=\frac{1}{|A_5|}\sum_{g\in A_5}\chi_{2}(g)\rho(g) =\frac{1}{60}\sum_{g\in A_5}\chi_{2}(g)\rho(g)  \\
&=\frac{\sqrt{5}}{30}\begin{pmatrix}
\sqrt{5} & \xi & \xi & \xi & \xi & \xi &-\bar\xi & 
-\bar\xi&-\bar\xi &-\bar\xi &-\bar\xi \\
 1 & \xi & 1 & 0 & 0 & 1 &
  \bar\xi & 0 & 1 & 1 & 0 \\
 1 & 1 & \xi & 1 & 0 & 0 & 0 &
  \bar\xi & 0 & 1 & 1 \\
 1 & 0 & 1 & \xi & 1 & 0 & 1 &
   0 &\bar\xi & 0 & 1 \\
 1 & 0 & 0 & 1 & \xi & 1 & 1 &
   1 & 0 &\bar\xi & 0 \\
 1 & 1 & 0 & 0 & 1 & \xi & 0 &
   1 & 1 & 0 &\bar\xi \\
 -1 &-\xi& -1 & 0 & 0 & -1 &
  -\bar\xi & 0 & -1 & -1 & 0 \\
 -1 & -1 &-\xi& -1 & 0 & 0 &
   0 &-\bar\xi & 0 & -1 & -1 \\
 -1 & 0 & -1 &-\xi& -1 & 0 &
   -1 & 0 &-\bar\xi & 0 & -1 \\
 -1 & 0 & 0 & -1 &-\xi& -1 &
   -1 & -1 & 0 &-\bar\xi & 0 \\
 -1 & -1 & 0 & 0 & -1 &-\xi&
   0 & -1 & -1 & 0 &-\bar\xi \\
   \end{pmatrix}.
\end{align*}
This means that a basis for $U_3$ is 
\begin{align*}
&\sqrt{5}\omega_0+\sum_{i=1}^5(\omega_i-\omega_{i+5}), \\
 &\xi \omega_0+\xi \omega_1+\omega_2+\omega_5-\xi\omega_6-\omega_7-\omega_{10}, \\
 &\xi \omega_0+ \omega_1+\xi\omega_2+\omega_3-\omega_6-\xi\omega_7-\omega_8.
\end{align*}
The projection operator for the other three dimensional subrepresentation is 
\begin{align*}
\pi_{\chi_3}&=\frac{1}{|A_5|}\sum_{g\in A_5}\chi_{3}(g)\rho(g) =\frac{1}{60}\sum_{g\in A_5}\chi_{3}(g)\rho(g)  \\
&=\frac{\sqrt{5}}{30}\begin{pmatrix}
\sqrt{5} &-\bar\xi & -\bar\xi & -\bar\xi & -\bar\xi & -\bar\xi & \xi & \xi & \xi & \xi & \xi \\
 -1 &-\bar\xi & -1 & 0 & 0 & -1 &
  -\xi & 0 & -1 & -1 & 0 \\
 -1 & -1 &-\bar\xi & -1 & 0 & 0 &
   0 &-\xi & 0 & -1 & -1 \\
 -1 & 0 & -1 &-\bar\xi & -1 & 0 &
   -1 & 0 &-\xi & 0 & -1 \\
 -1 & 0 & 0 & -1 &-\bar\xi & -1 &
   -1 & -1 & 0 &-\xi & 0 \\
 -1 & -1 & 0 & 0 & -1 &-\bar\xi &
   0 & -1 & -1 & 0 &-\xi \\
 1 &\bar\xi & 1 & 0 & 0 & 1 &\xi & 0 & 1 & 1 & 0 \\
 1 & 1 &\bar\xi & 1 & 0 & 0 & 0 &
  \xi & 0 & 1 & 1 \\
 1 & 0 & 1 &\bar\xi & 1 & 0 & 1 &
   0 &\xi & 0 & 1 \\
 1 & 0 & 0 & 1 &\bar\xi & 1 & 1 &
   1 & 0 &\xi & 0 \\
 1 & 1 & 0 & 0 & 1 &\bar\xi & 0 &
   1 & 1 & 0 &\xi \\   \end{pmatrix}.
\end{align*}
So a basis for $U_3'$ is 
\begin{align*}
&\sqrt{5}\omega_0-\sum_{i=1}^5(\omega_i-\omega_{i+5}), \\
 &-\bar\xi \omega_0-\bar\xi \omega_1-\omega_2-\omega_5+\bar\xi\omega_6+\omega_7+\omega_{10}, \\
 &-\bar\xi \omega_0- \omega_1-\bar\xi\omega_2-\omega_3+\omega_6+\bar\xi\omega_7+\omega_8.
\end{align*}

\section{Acknowledgements} This work was partially supported by a grant from the Simons Foundation (\#319261).  The author would also like to thank the Mittag-Leffler Institute and its organizers for its hospitality during part of the spring semester of 2015 where a portion of this work was done. We would also like to thank Kaiming Zhao for useful discussions and the referee for a list of relevant references. 

\section{Concluding Remark.}
There are other rings $R$ that arise as rings of meromorphic functions on a Riemann surface with a finite number of points removed.  We plan to investigate how $\Omega_{R}/dR$ decomposes under the action their automorphism group of $R$.    
\def\cprime{$'$} \def\cprime{$'$} \def\cprime{$'$}
\providecommand{\bysame}{\leavevmode\hbox to3em{\hrulefill}\thinspace}
\providecommand{\MR}{\relax\ifhmode\unskip\space\fi MR }
\providecommand{\MRhref}[2]{%
  \href{http://www.ams.org/mathscinet-getitem?mr=#1}{#2}
}
\providecommand{\href}[2]{#2}

\def\cprime{$'$} \def\cprime{$'$} \def\cprime{$'$}

 \end{document}